\newtheorem{proposition}{Proposition}[section]
\newtheorem{theorem}[proposition]{Theorem}
\newtheorem{lemma}[proposition]{Lemma}
\theoremstyle{definition}
\theoremstyle{remark}
\newtheorem{remark}[proposition]{Remark}
\numberwithin{equation}{section}
\newcommand{\R}{\mathds{R}}
\renewcommand{\P}{\mathbb{P}}
\newcommand{\C}{\mathscr{C}}
\DeclareMathOperator{\esssup}{ess\,sup}
\begin{document}

\title{Well-posedness for the Boussinesq system\\in critical spaces via maximal regularity}
\author{Lorenzo Brandolese \and Sylvie Monniaux}
\date{}


\maketitle

\abstract{We establish the existence and the uniqueness for the Boussinesq system
in $\R^3$ in the critical space $\C([0,T],L^3(\R^3)^3)\times L^2(0,T;L^{3/2}(\R^3))$.}

\section{Introduction}

We consider the Cauchy problem associated with the Boussinesq system in ${\mathds{R}}^3$:
\begin{equation}\tag{B}
\label{B}
\begin{array}{rclcc}
\partial_t u-\Delta u+\nabla\pi+\nabla\cdot(u\otimes u)&=&\theta e_3 &\mbox{ in }&(0,T)\times{\mathds{R}}^3 \\
{\rm div}\,u&=&0 &\mbox{ in }&(0,T)\times{\mathds{R}}^3 \\
\partial_t \theta -\Delta \theta +u\cdot \nabla\theta &=&0 &\mbox{ in }&(0,T)\times{\mathds{R}}^3, 
\end{array}
\end{equation}
where $u$ denotes the velocity of the fluid, $\pi$ the pressure, $\theta$ the temperature and $e_3$ the third vector 
of the canonical basis in $\R^3$.
The given initial velocity and temperature are denoted respectively $u_0$ and 
$\theta_0$. The  initial velocity will be always assumed to satisfy the condition ${\rm div}\,u_0=0$.
The system~\eqref{B} appears in the study of the motion of incompressible viscous flows when
one takes into account buoyancy effects arising from temperature variations inside the fluid. When the latter are 
neglected ($\theta\equiv0$), the system boils down to the classical Navier--Stokes equations.  

The natural scaling leaving the Boussinesq system invariant is
$\lambda\mapsto(u_\lambda,\theta_\lambda)$ with 
\[
u_\lambda(t,x)= \lambda u(\lambda^2t,\lambda x)
\qquad
\text{and}
\qquad
\theta_\lambda(t,x)= \lambda^3 \theta (\lambda^2t,\lambda x).
\]
This motivates the study of~\eqref{B} in function spaces that are left invariant
by the above scaling.
Assuming $(u_0,\theta_0)\in L^3(\R^3)^3\times L^1(\R^3)$, an adaptation of Kato's $L^p$-theory on strong solutions of 
Navier--Stokes~\cite{Kat84}, yields the local-in-time existence and the uniqueness in appropriate scale-invariant function 
spaces where the fixed point argument applies.
But, as discussed in \cite{BraH}, the uniqueness problem in the natural space
$\C([0,T],L^3(\R^3)^3)\times \C([0,T],L^1(\R^3))$
seems to be out of reach, due to the lack of regularity
results in this class, and to the difficulty of giving a meaning, in the distributional sense, to the nonlinearity  
$\nabla\cdot(u\theta)$ (and, of course, to $u\cdot \nabla \theta$) in the last equation in~\eqref{B}.
To circumvent this difficulty, the uniqueness for the Cauchy problem, in~\cite{BraH}, was established
in a smaller space for the temperature, namely
$\C([0,T],L^1(\R^3)\cap L^\infty_{\rm loc}(0,T;L^{q,\infty}(\R^3))$, for some $q>3/2$.
This restriction on the temperature, however, is a bit artificial:
the excluded borderline case, $q=3/2$, is precisely the most interesting one, as it corresponds
to the minimal regularity to be imposed on the temperature, when the velocity
is in the natural space $L^3(\R^3)^3$, to give a sense to the nonlinearity.

The scaling relations then lead us to consider solutions such that
$t\mapsto \|\theta(t)\|_{L^{3/2}}$ is in $L^2$.
Therefore, it seems natural to address the uniqueness problem (and the existence) in
\[
\C([0,T],L^3(\R^3)^3)\times L^2(0,T;L^{\frac32}(\R^3)).
\]
As the Lorentz-space approach of \cites{KozY, Mey, DanP}, applied in \cite{BraH}, 
fails when $q=3/2$, we have to adopt a different strategy.
Our main tools will be maximal regularity estimates. The idea of using the maximal regularity in uniqueness 
problems goes back to~\cite{Mon}, where the second author
gave a short proof of celebrated Furioli, Lemari\'e and Terraneo's uniqueness theorem \cite{FLT} of mild solutions of the 
Navier--Stokes equations in $\C([0,T],L^3(\R^3))$.
In the present paper,  we will need to use the maximal regularity in an original way,
in order to make it applicable despite the product $u\theta$ a priori just belongs to $L^1(\R^3)^3$.

In fact, our approach allows us to obtain the uniqueness, and then the regularity as a byproduct of the existence theory, 
in a larger class, namely
\begin{equation}
\label{eq:L-infty-class}
\Bigl(\C([0,T],L^3(\R^3)^3) +r\, L^\infty(0,T;L^3(\R^3)^3)\Bigr)\times L^2(0,T;L^{\frac32}(\R^3))
\end{equation}
for some small enough $r>0$.

One could speculate that the smallness condition on the parameter $r$ may be unessential and that the uniqueness and the 
regularity could be true in the larger space $L^\infty(0,T;L^3(\R^3)^3)\times L^2(0,T;L^{\frac32}(\R^3))$. This would be a nontrivial 
generalization for the system~\eqref{B} of the deep result of Escauriaza, Seregin and \v Sverák \cite{ISS}, about endpoint 
Serrin regularity criteria for the Navier--Stokes equations. Establishing such a result would probably require various ingredients 
(backward uniqueness, profile decompositions, \cites{ISS,GKP,Phuc}, etc.). 
Whether or not such a stronger statement is true, we feel that our main theorem
would remain of interest because of its attractive proof, entirely based on maximal
regularity estimates.

\section{Statement of the main results}

Let $T>0$ and $r>0$. Let ${\mathscr{S}}'(\R^3)$ denote the dual of Schwartz space.
In order to state our uniqueness result in the class
\[
X_{T,r}:=\Bigl(\C([0,T],L^3(\R^3)^3) +r\, L^\infty(0,T;L^3(\R^3)^3)\Bigr)
\times L^2(0,T;L^{\frac{3}{2}}({\mathds{R}}^3)),
\]
we first clarify what we mean by solution of~\eqref{B}.
By definition, a mild solution of \eqref{B} with inital data $(u_0,\theta_0)\in \mathscr{S}'(\R^3)^3\times\mathscr{S}'(\R^3)$, 
and ${\rm div\,}u_0=0$, is a couple $(u,\theta)\in X_{T,r}$ solving the Boussinesq system written in 
its integral form~\eqref{def:sol} below:
\begin{equation}
\label{def:sol}
\begin{split}
u&= a+B(u,u)+L(\theta)\\
\theta&= b+C(u,\theta)
\end{split}
\end{equation}
with 
\[
a(t)=e^{t\Delta}u_0,  \qquad
 b(t)=e^{t\Delta}\theta_0.
\]
The operators $B$, $C$ and $L$ are defined, for $t\in[0,T]$, and $(u,\theta)\in X_{T,r}$,
by 
\begin{equation}
\label{def:B}
B(u,v)(t)=-\int_0^te^{(t-s)\Delta}{\mathbb{P}}\bigl(\nabla\cdot\bigl(u(s)\otimes v(s)\bigr)\bigr)\,{\rm d}s, 
\end{equation}
\begin{equation}
\label{def:C}
C(u,\theta)(t)=-\int_0^t e^{(t-s)\Delta}{\rm div}\,\bigl(\theta(s)u(s)\bigr)\,{\rm d}s,
\end{equation}
and
\begin{equation}
\label{def:L}
L(\theta)(t)=\int_0^t e^{(t-s)\Delta}{\mathbb{P}}(\theta(s)e_3)\,{\rm d}s.
\end{equation}
Here $\P$ denotes Leray's projector onto divergence-free vector fields and $\bigl(e^{t\Delta}\bigr)_{t\ge 0}$
is the heat semigroup.

Our main result then is stated as follows:

\begin{theorem}
\label{th:uni}
There is an absolute constant $r_0>0$ such that, if 
$(u_1,\theta_1)$ and $(u_2,\theta_2)$ are two mild solutions to~\eqref{B} 
in~$X_{T,r}$, with the same initial data $(u_0,\theta_0)\in \mathscr{S}'(\R^3)^3\times \mathscr{S}'(\R^3)$, 
${\rm div}\,u_0=0$ and $0\le r< r_0$,
then
\[
(u_1,\theta_1)=(u_2,\theta_2).
\]
\end{theorem}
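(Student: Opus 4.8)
The plan is to pass to the differences $w:=u_1-u_2$ and $\eta:=\theta_1-\theta_2$ and to prove that $(w,\eta)=(0,0)$ in $X_{T,r}$. Subtracting the two copies of the integral system \eqref{def:sol} and using that $a$ and $b$ (hence all dependence on the common initial data) cancel, together with the bilinearity of $B$ and $C$ and the linearity of $L$, one obtains the closed homogeneous Volterra system
\begin{equation*}
w = B(u_1,w)+B(w,u_2)+L(\eta),\qquad \eta = C(u_1,\eta)+C(w,\theta_2),
\end{equation*}
with no forcing term. Everything then reduces to showing that the only solution of this system in $X_{T,r}$ is the trivial one.

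The delicate point is the temperature equation: since $u_i\in L^3$ and $\theta_i\in L^{3/2}$, the products $\theta_i u_i$ and $\theta_2 w$ only lie in $L^1$ in space, which is outside the range where the heat semigroup smooths and where maximal regularity applies. The key idea—the ``original'' use of maximal regularity alluded to in the introduction—is to measure the velocity difference $w$ not in the critical space $L^3$ but in scale-invariant mixed norms $L^{p}(0,T;L^{q})$ with $q>3$. Gaining spatial integrability strictly above $3$ is precisely what is needed so that $\theta_2 w$ (and $u_1\eta$) land in some $L^{q'}$ with $q'>1$: indeed Hölder forces $\tfrac1{q'}=\tfrac23+\tfrac1q<1$ exactly when $q>3$. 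Once the products sit in a genuine $L^{q'}$, $q'>1$, the operators $B$, $C$, $L$ become bounded via maximal regularity estimates for $\int_0^t e^{(t-s)\Delta}\P\nabla\cdot(\,\cdot\,)\,\mathrm ds$ and $\int_0^t e^{(t-s)\Delta}\nabla\cdot(\,\cdot\,)\,\mathrm ds$. One must check that the exponents for $w$ and $\eta$ can be chosen compatibly, respecting the parabolic scaling, so that the full map sends a product space $Y_\tau$ for $(w,\eta)$ over a subinterval of length $\tau$ back into itself.

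Next I would turn the boundedness into a strict contraction on short intervals. The operator norm of $(w,\eta)\mapsto\bigl(B(u_1,w)+B(w,u_2)+L(\eta),\,C(u_1,\eta)+C(w,\theta_2)\bigr)$ is controlled by the sizes of the coefficients $u_1,u_2,\theta_2$ over the interval. To make these small on $[t_0,t_0+\tau]$, write $u_i=v_i+r\,z_i$ with $v_i\in\C([0,T],L^3)$ and $z_i\in L^\infty(0,T;L^3)$; the continuity of $v_i$ lets me freeze it at $t_0$ up to a small $L^3$ oscillation and then split $v_i(t_0)$ into a genuinely small $L^3$ part plus a smooth bounded part whose contribution carries an extra power of $\tau$, while the term $r\,z_i$ is small as soon as $r<r_0$. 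For the temperature coefficient, $\int_{t_0}^{t_0+\tau}\|\theta_2(s)\|_{L^{3/2}}^2\,\mathrm ds\to0$ as $\tau\to0$ by absolute continuity of the integral, so no splitting is needed there. Choosing $r_0$ and $\tau$ small enough makes the map a strict contraction on $Y_\tau$, whence $(w,\eta)=(0,0)$ on $[t_0,t_0+\tau]$.

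Propagation to all of $[0,T]$ then exploits the causal structure. If $t^\ast:=\sup\{t:(w,\eta)=(0,0)\text{ a.e. on }[0,t]\}$ were $<T$, then for $t\in[t^\ast,t^\ast+\tau]$ every integral $\int_0^t$ collapses to $\int_{t^\ast}^t$, because each integrand is linear in $(w,\eta)$ evaluated at the integration variable and so vanishes on $[0,t^\ast]$; the short-interval contraction then applies afresh with base point $t^\ast$ and contradicts maximality. I expect the genuine obstacle to be the second step: finding mixed-norm exponents that simultaneously tame the borderline product $\theta u\in L^1$, keep all three operators $B$, $C$, $L$ bounded on a single space through maximal regularity, and respect the scaling so that coefficient norms are small on short intervals. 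Because the velocity and temperature estimates are coupled through $L(\eta)$ and $C(w,\theta_2)$, these exponent choices cannot be made separately and must be balanced against one another.
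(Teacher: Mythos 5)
Your overall strategy coincides with the paper's: pass to the difference system, gain spatial integrability on the velocity difference by working in scale-invariant mixed norms (the paper uses $L^4(0,\tau;L^6(\R^3)^3)$ for the velocity and $L^{4/3}(0,\tau;L^2(\R^3))$ for the temperature, coupled through $L$ and $C(\cdot,\theta_2)$), make the coefficient operators small by splitting $u_i$ into a compactly supported continuous part plus a part of $L^\infty_tL^3_x$-norm at most $3r$, and propagate the short-time conclusion by a continuity/shift argument. But there is a genuine gap at the heart of your argument: you run a linear contraction on an auxiliary space $Y_\tau$ and conclude $(w,\eta)=(0,0)$, whereas all you know a priori is that $(w,\eta)$ solves the homogeneous system in $X_{T,r}$. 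A contraction on $Y_\tau$ only shows that $0$ is the unique solution \emph{belonging to $Y_\tau$}; it says nothing about solutions that might live only in $X_{T,r}\setminus Y_\tau$. Equivalently, your final absorption step has the form $\|\eta\|_{Y_\tau}\le\varepsilon\bigl(\|\eta\|_{Y_\tau}+\|w\|_{Y_\tau}\bigr)$, which is vacuous unless you first prove these norms are finite.

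Closing this gap is precisely the technical core of the paper and requires two bootstrap lemmas that your proposal does not supply. First (Lemma~\ref{lem:uL4L6}), one shows that \emph{each} solution's velocity lies in $L^4(0,\tau;L^6)$: one inverts ${\rm Id}-B(\cdot,u)$ on $L^4L^6\cap L^pL^3$, defines $\tilde u=({\rm Id}-B(\cdot,u))^{-1}(a+L(\theta))$, and then identifies $u=\tilde u$ by an auxiliary uniqueness argument carried out in $L^p(0,\tau;L^3)$ --- a space in which $u$ \emph{is} already known to live; this step also needs $a=e^{t\Delta}u_0\in L^4(0,\tau;L^6)$, hence $u_0\in L^3\subset\dot B^{-1/2}_{6,4}$, which in turn rests on the preliminary weak-$*$ continuity result of Lemma~\ref{lem:S'}. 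Second (Lemma~\ref{lem:theta-reg}), the same inversion trick applied to ${\rm Id}-C(u_1,\cdot)$ on $L^{4/3}L^2\cap L^2L^{3/2}$ shows that the temperature difference belongs to $L^{4/3}(0,\tau;L^2)$, again via an auxiliary uniqueness argument in $L^2(0,\tau;L^{3/2})$ where $\eta$ is known to live. Without these identifications your contraction establishes uniqueness only within $Y_\tau$, not in the class $X_{T,r}$ claimed by the theorem. (Two minor points: your observation that the coefficient $\theta_2$ needs no splitting is essentially correct, since its $L^2_tL^{3/2}_x$-norm on short intervals tends to zero; and your causal restart at $t^\ast$ is a legitimate variant of the paper's shift argument, but it too requires re-establishing membership of the differences in $Y_\tau$ over $[t^\ast,t^\ast+\tau]$.)
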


As we will see, any solution $(u,\theta)$ as in~Theorem~\ref{th:uni} must belong
to $\C([0,T],\mathscr{S}'(\R^3)^3\times \mathscr{S}'(\R^3))$, and the initial data must belong more precisely to 
$L^3(\R^3)^3\times B^{-1}_{3/2,2}(\R^3)$.
The above theorem is then completed by the corresponding existence result:

\begin{theorem}
\label{th:existence}\mbox{}
\begin{itemize}
\item[i)]
Let $(u_0,\theta_0)\in L^3(\R^3)^3\times B^{-1}_{3/2,2}(\R^3)$, with ${\rm div}\,u_0=0$.
Then there exists $T>0$ and a solution of~\eqref{def:sol}
$(u,\theta)\in\C([0,T],L^3(\R^3)^3)\times L^2(0,T;L^{\frac32}(\R^3))$.
\item[ii)]
If $\theta_0$ belongs to the smaller homogeneous Besov space 
$\dot B^{-1}_{3/2,2}(\R^3)$ and if $\|u_0\|_{L^3}+\|\theta_0\|_{\dot B^{-1}_{3/2,2}}$ is small enough, than such solution is global and
$(u,\theta)\in\C_b(0,\infty;L^3(\R^3)^3)\times L^2(0,\infty;L^{\frac32}(\R^3))$.
\end{itemize}
\end{theorem}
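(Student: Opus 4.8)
The plan is to construct $(u,\theta)$ by a contraction argument for the integral system~\eqref{def:sol} in a scale–invariant resolution space, and then to recover the claimed continuity of the velocity by a maximal regularity argument. Throughout I work with the Serrin-type norms
\[
\|u\|_{X_u}:=\|u\|_{L^4(0,T;L^6(\R^3))},\qquad
\|\theta\|_{X_\theta}:=\|\theta\|_{L^2(0,T;L^{3/2}(\R^3))},
\]
and set $X_T:=X_u\times X_\theta$. The first point is that the free evolutions lie in $X_T$: since $u_0\in L^3$ embeds into the homogeneous Besov space $\dot B^{-1/2}_{6,4}$, the heat characterization of Besov norms gives $a=e^{t\Delta}u_0\in L^4(0,\infty;L^6)$ with $\|a\|_{X_u}\lesssim\|u_0\|_{L^3}$; and the very definition of the temperature data space, through the characterization $\|\theta_0\|_{\dot B^{-1}_{3/2,2}}\simeq\bigl(\int_0^\infty\|e^{t\Delta}\theta_0\|_{L^{3/2}}^2\,dt\bigr)^{1/2}$, gives $b=e^{t\Delta}\theta_0\in L^2(0,T;L^{3/2})$ with $\|b\|_{X_\theta}\lesssim\|\theta_0\|_{B^{-1}_{3/2,2}}$. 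By absolute continuity of the integral, both driving norms $\|a\|_{X_u}$ and $\|b\|_{X_\theta}$ tend to $0$ as $T\to0$, which is what will allow arbitrary data to be treated on a short time interval.

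The multilinear estimates all follow from the $L^p$–$L^q$ smoothing of the heat semigroup combined with the one-dimensional Hardy–Littlewood–Sobolev inequality in time. For the Navier–Stokes term, $\|e^{\tau\Delta}\P\nabla\cdot(u\otimes v)\|_{L^6}\lesssim\tau^{-3/4}\|u\|_{L^6}\|v\|_{L^6}$, so that $B$ maps $X_u\times X_u$ into $X_u$ via $I_{1/4}\colon L^2\to L^4$; the same kernel controls $L(\theta)$, giving $\|L(\theta)\|_{X_u}\lesssim\|\theta\|_{X_\theta}$. The crucial term is $C$. Here the product $\theta u$ is only in $L^1$ if one uses $u\in L^3$ alone---this is exactly the obstruction discussed in the introduction---but on the resolution space the Serrin bound $u\in L^4L^6$ upgrades it to $\theta u\in L^{4/3}(0,T;L^{6/5})$, since $\tfrac{1}{6/5}=\tfrac{1}{3/2}+\tfrac16$ and $\tfrac{1}{4/3}=\tfrac12+\tfrac14$. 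Using $\|\nabla e^{\tau\Delta}F\|_{L^{3/2}}\lesssim\tau^{-3/4}\|F\|_{L^{6/5}}$ and the now non-endpoint embedding $I_{1/4}\colon L^{4/3}\to L^2$ in time, I obtain
\[
\|C(u,\theta)\|_{X_\theta}\lesssim\|u\|_{X_u}\,\|\theta\|_{X_\theta}.
\]
This is the step where the extra integrability carried by the resolution space is essential: it moves the temporal exponent away from the forbidden endpoint of Hardy–Littlewood–Sobolev.

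With these estimates the map $\Phi(u,\theta):=\bigl(a+B(u,u)+L(\theta),\,b+C(u,\theta)\bigr)$ is a contraction on a small ball of $X_T$. For part~i), given arbitrary $(u_0,\theta_0)$, the driving norm $\|a\|_{X_u}+\|b\|_{X_\theta}$ is made as small as needed by taking $T$ small, which closes the fixed point and yields a unique small solution in $X_T$. For part~ii) one runs the identical scheme on $(0,\infty)$: all the above estimates are scale invariant, hence hold on $(0,\infty)$ with the same constants, while $\|a\|_{L^4(0,\infty;L^6)}\lesssim\|u_0\|_{L^3}$ and $\|b\|_{L^2(0,\infty;L^{3/2})}=\|\theta_0\|_{\dot B^{-1}_{3/2,2}}$; the smallness hypothesis on $\|u_0\|_{L^3}+\|\theta_0\|_{\dot B^{-1}_{3/2,2}}$ then provides a global solution in $L^4(0,\infty;L^6)\times L^2(0,\infty;L^{3/2})$.

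It remains to upgrade the velocity to $\C([0,T],L^3)$ (resp. to $\C_b(0,\infty;L^3)$), and here lies the main technical obstacle. One cannot bound $L(\theta)$ or $B(u,u)$ directly in $L^\infty(0,T;L^3)$: the relevant kernel is $\tau^{-1/2}$, and the time integral would require the endpoint $I_{1/2}\colon L^2\to L^\infty$ of Hardy–Littlewood–Sobolev, which fails. I would instead invoke maximal $L^2$-regularity for the heat semigroup. For $L(\theta)$ the source $\P(\theta e_3)$ lies in $L^2(0,T;L^{3/2})$, so maximal regularity places $L(\theta)$ in $L^2(0,T;W^{2,3/2})\cap H^1(0,T;L^{3/2})$, which embeds continuously into $\C\bigl([0,T],(L^{3/2},W^{2,3/2})_{1/2,2}\bigr)=\C([0,T],B^1_{3/2,2})\hookrightarrow\C([0,T],L^3)$; the bilinear term is handled similarly (or by the classical $L^3$-theory of Navier–Stokes), its source being a spatial divergence of a field in $L^2(0,T;L^3)$. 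Since $a\in\C([0,T],L^3)$ and $B(u,u)(0)=L(\theta)(0)=0$, this yields $u\in\C([0,T],L^3)$ with $u(0)=u_0$, completing~i); the same maximal-regularity estimate on $(0,\infty)$ together with the global Serrin bound gives the bounded continuity in~ii).
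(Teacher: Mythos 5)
Your route is genuinely different from the paper's. The paper does not redo a fixed point at all: it takes the solution already constructed in \cite{BraH} (Theorem~5.2 here), which lives in the weighted spaces $Z_{6,T}\times Y_{2,T}$ with $u\in\C([0,T],L^3)$ and with norms vanishing as $T\to0$, and then proves \emph{a posteriori} that $\theta\in L^2(0,T;L^{3/2})$ by writing $\theta=b+C(u,\theta)$ and inverting ${\rm Id}-C(\cdot,u)$ on $L^2(0,T;L^{3/2})$ via the single bilinear estimate of Proposition~\ref{prop:lo} (a Young--O'Neil/Lorentz argument, since $u$ is only in $Z_{6,T}$ rather than $L^4L^6$). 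Your plan instead rebuilds the whole solution by contraction in $L^4(0,T;L^6)\times L^2(0,T;L^{3/2})$ and then upgrades $u$ to $\C([0,T],L^3)$ by maximal regularity and the trace embedding into $(L^{3/2},W^{2,3/2})_{1/2,2}=B^1_{3/2,2}\hookrightarrow L^3$ (and $(W^{-1,3},W^{1,3})_{1/2,2}=B^0_{3,2}\hookrightarrow L^3$ for the bilinear term). Your multilinear estimates are all correct and non-endpoint, and the data estimates for $a$ and $b$ are exactly the ones the paper uses elsewhere; what your approach buys is self-containedness, at the price of re-proving existence.

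There is, however, one genuine gap: the assertion that $\Phi(u,\theta)=\bigl(a+B(u,u)+L(\theta),\,b+C(u,\theta)\bigr)$ ``is a contraction on a small ball of $X_T$.'' The Lipschitz constant of $\Phi$ in the $\theta$-direction of its first component is exactly the operator norm of $L\colon L^2(0,T;L^{3/2})\to L^4(0,T;L^6)$, which by scale invariance is a fixed absolute constant, independent of $T$ and of the radius of the ball. Unlike the quadratic terms, it does not shrink when you shrink $T$ or the data, so if that constant is $\ge1$ the map is never a contraction in the plain product norm. This is precisely the feature that distinguishes Boussinesq from Navier--Stokes, and it needs an explicit extra idea: either work with a weighted norm $\|u\|_{X_u}+M\|\theta\|_{X_\theta}$ with $M$ large (and then re-check that $M\|b\|_{X_\theta}$ can still be made small), or exploit the triangular structure by first solving the $\theta$-equation, which is \emph{linear} in $\theta$, setting $\theta=({\rm Id}-C(u,\cdot))^{-1}b$ for $\|u\|_{X_u}$ small, and running the fixed point on $u$ alone, where the map $u\mapsto L\bigl(({\rm Id}-C(u,\cdot))^{-1}b\bigr)$ has Lipschitz constant of order $\|L\|\,\|b\|_{X_\theta}$, which is small. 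The second option is in effect what the paper's Neumann-series argument does. With that repair (and the homogeneous version of the trace theorem for the global statement in part~ii)), your proof goes through.
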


For some other existence results for the Boussinesq system in different functional setting we refer, e.g.,
to~\cite{DanPbis, DanP, KarP}.

\section{Applications of the maximal regularity}
\label{sec:max-reg}

The purpose of this section is to study the properties of the operators $B$, $C$ and $L$, respectively defined by
\eqref{def:B}, \eqref{def:C} and \eqref{def:L}, by means of the following maximal regularity result.
The theorem below is classical. See \cite{deS} for the case $p \in (1,\infty)$ and $q=2$. The general case was first
proved in \cite{LSU68}, Chapter IV, \S3. The proof of the estimates for the mixed fractional space-time derivatives 
goes back to~\cite[Theorem 6]{Sob}. See also \cite[Theorem~7.3]{Lem02} for a modern proof.

\begin{theorem}[Maximal regularity]
\label{thm:regmax}
Let $1<p,q<\infty$.
Let $R$ be the operator defined for $f\in L^1_{\rm loc}(0,\infty;{\mathscr{S}}'(\R^d))$,
$d\ge1$, by
\begin{equation}
\label{def:R}
Rf(t)=\int_0^te^{(t-s)\Delta}f(s)\,{\rm d}s,\quad t>0.
\end{equation}
Such operator $R$ is bounded from $L^p\bigl(0,\infty;L^q({\mathds{R}}^d)\bigr)$ to 
$\dot W^{1,p}\bigl(0,\infty;L^q({\mathds{R}}^d)\bigr)
\cap L^p\bigl(0,\infty;\dot W^{2,q}({\mathds{R}}^d)\bigr)$. 
In other words,
$\frac{{\rm d}}{{\rm d}t} R$, $\Delta R$, and 
$(-\Delta)^{\alpha}\bigl(\tfrac{{\rm d}}{{\rm d}t}\bigr)^{1-\alpha}R$,
for any $0<\alpha<1$, are bounded operators in   
$L^p\bigl(0,\infty;L^q({\mathds{R}}^d)\bigr)$.
Moreover, there exists a constant $C_{p,q}$ such that 
\[
\bigl\|\tfrac{{\rm d}}{{\rm d}t} Rf\bigr\|_{L^p(L^q)}+\bigl\|\Delta Rf\bigr\|_{L^p(L^q)}
+\bigl\|(-\Delta)^{\alpha}\bigl(\tfrac{{\rm d}}{{\rm d}t}\bigr)^{1-\alpha}Rf\bigr\|_{L^p(L^q)} \le C_{p,q}\|f\|_{L^p(L^q)},
\]
for all $\alpha \in (0,1)$.
\end{theorem}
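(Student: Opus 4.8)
The plan is to recognize $R$ as the solution operator $(\partial_t-\Delta)^{-1}$ for the heat equation with zero initial datum, and to prove the three estimates by an operator-theoretic argument. First I would reduce the number of estimates: differentiating the Duhamel formula~\eqref{def:R} gives the identity $\frac{\mathrm d}{\mathrm dt}Rf=f+\Delta Rf$, so that $\|\frac{\mathrm d}{\mathrm dt}Rf\|_{L^p(L^q)}$ and $\|\Delta Rf\|_{L^p(L^q)}$ differ by at most $\|f\|_{L^p(L^q)}$, and it suffices to bound one of them. As a sanity check and to fix the symbols, I would extend $f$ by zero for $t<0$ and take the Fourier transform in both $t\mapsto\tau$ and $x\mapsto\xi$: the operator $R$ becomes multiplication by $(i\tau+|\xi|^2)^{-1}$, so that $\Delta R$, $\frac{\mathrm d}{\mathrm dt}R$ and $(-\Delta)^\alpha(\frac{\mathrm d}{\mathrm dt})^{1-\alpha}R$ correspond respectively to the symbols $\frac{-|\xi|^2}{i\tau+|\xi|^2}$, $\frac{i\tau}{i\tau+|\xi|^2}$ and $\frac{|\xi|^{2\alpha}(i\tau)^{1-\alpha}}{i\tau+|\xi|^2}$. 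Each is bounded in modulus — the last because $|\xi|^{2\alpha}|\tau|^{1-\alpha}\le|\xi|^2+|\tau|$ by the weighted arithmetic--geometric inequality — so Plancherel settles the case $p=q=2$ at once.

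For general exponents the mixed norm $p\ne q$ is precisely what rules out a scalar Calder\'on--Zygmund or Plancherel argument, and I would pass to the operator-theoretic framework. Let $A=-\Delta$ on $L^q(\R^d)$ and $B=\frac{\mathrm d}{\mathrm dt}$ on $L^p(0,\infty)$, with domain the $W^{1,p}$ functions vanishing at $t=0$. Both are injective and sectorial with bounded imaginary powers, $A$ of power angle $0$ (the heat semigroup is bounded analytic) and $B$ of power angle $\pi/2$, and their resolvents commute; since $L^q(\R^d)$ is a UMD space for $1<q<\infty$ these facts are standard. Because the power angles sum to $\pi/2<\pi$, the Dore--Venni theorem (in the form valid for injective sectorial operators, so as to stay in the homogeneous scale of the statement) applies to the closed sum $A+B=\partial_t-\Delta$, whose inverse is exactly $R$, and yields at once that $-\Delta R=A(A+B)^{-1}$ and $\partial_t R=B(A+B)^{-1}$ are bounded on $L^p(0,\infty;L^q(\R^d))$. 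This disposes of the two integer-order estimates.

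For the mixed fractional derivative I would write $(-\Delta)^\alpha(\frac{\mathrm d}{\mathrm dt})^{1-\alpha}R=A^\alpha B^{1-\alpha}(A+B)^{-1}$ and invoke the joint $H^\infty$ functional calculus of the commuting pair $(A,B)$ (in the sense of Kalton--Weis). The relevant symbol is the holomorphic function $\phi_\alpha(z,w)=\frac{z^\alpha w^{1-\alpha}}{z+w}$ on the product of two sectors containing the spectra of $A$ and $B$; choosing their half-angles to sum to strictly less than $\pi$ one has $|z+w|\ge c(|z|+|w|)$ there, whence $|\phi_\alpha(z,w)|\le \frac{\alpha|z|+(1-\alpha)|w|}{c(|z|+|w|)}\le c^{-1}$, uniformly in $\alpha\in(0,1)$, by the same weighted arithmetic--geometric inequality used above. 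The joint calculus transfers this uniform bound to the operator, giving $\sup_{0<\alpha<1}\|A^\alpha B^{1-\alpha}(A+B)^{-1}\|<\infty$, which is the last estimate.

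The main obstacle is exactly this mixed, fractional, and uniform-in-$\alpha$ bound: whereas the two integer estimates are the classical maximal regularity and are essentially interchangeable through $\partial_t R=I+\Delta R$, controlling $A^\alpha B^{1-\alpha}(A+B)^{-1}$ for all $\alpha$ at once requires genuine joint harmonic analysis of the two operators. Concretely, either one develops the commuting functional calculus as above, or one proves an operator-valued Mikhlin multiplier theorem in the UMD space $L^q(\R^d)$ and verifies the $\mathcal{R}$-boundedness of the family of spatial Fourier multipliers $\frac{|\xi|^{2\alpha}(i\tau)^{1-\alpha}}{i\tau+|\xi|^2}$, together with their $\tau$-derivatives, with constants independent of $\alpha$. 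Both routes rest on the $\mathcal{R}$-boundedness furnished by the UMD property of $L^q$, and it is there that the real work lies; for a self-contained treatment along these lines I would follow the modern exposition in \cite[Theorem~7.3]{Lem02}.
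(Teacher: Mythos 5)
The paper does not actually prove Theorem~\ref{thm:regmax}: it is invoked as a classical result, with the $q=2$ case attributed to de Simon, the general $L^p(L^q)$ case to Ladyzhenskaya--Solonnikov--Uraltseva, the mixed fractional estimate to Sobolevskii, and a modern treatment to \cite[Theorem~7.3]{Lem02}. Your proposal therefore cannot be compared with an in-paper argument; judged on its own it is a correct and coherent outline of the standard modern operator-theoretic proof. The reduction $\tfrac{\rm d}{{\rm d}t}Rf=f+\Delta Rf$, the Plancherel verification at $p=q=2$ via the symbol $(i\tau+|\xi|^2)^{-1}$, the identification $R=(A+B)^{-1}$ with $A=-\Delta$ on $L^q$ and $B=\tfrac{\rm d}{{\rm d}t}$ on $L^p(0,\infty)$ with zero trace at $t=0$, and the application of Dore--Venni (power angles $0$ and $\pi/2$, UMD target) are all sound, as is the pointwise bound $|z^\alpha w^{1-\alpha}/(z+w)|\le c^{-1}$ on a product of sectors via weighted AM--GM. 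The one place where the sketch is genuinely thin is the transfer of that uniform symbol bound to the operator $A^\alpha B^{1-\alpha}(A+B)^{-1}$: the function $\phi_\alpha$ lies in $H^\infty_0$ of each sector only with decay rate $\min(\alpha,1-\alpha)$, which degenerates as $\alpha\to0,1$, so a naive estimate through the absolutely convergent Cauchy integral of the joint calculus does \emph{not} give a constant independent of $\alpha$; you need the stronger statement that the pair $(A,B)$ admits a bounded joint $H^\infty$ calculus whose operator norm is controlled by the supremum of the symbol alone (Kalton--Weis, resting on $\mathcal R$-boundedness in the UMD space $L^q$). You correctly identify this as the locus of the real work, so the gap is one of omitted detail rather than of strategy. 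Note also that the route you take is different in flavor from the cited classical/elementary proofs (parabolic Calder\'on--Zygmund theory in $\R^{1+d}$ for $p=q$, then operator-valued singular integrals in $t$ for $p\ne q$, as in \cite{Lem02}); your functional-calculus route is more abstract but handles the whole family $\alpha\in(0,1)$ in one stroke, which is precisely what the theorem as stated requires.
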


To establish Theorem~\ref{th:uni}, we assume that we have two
mild solutions $(u_1,\theta_1)\in X_{T,r}$ and $(u_2,\theta_2)\in X_{T,r}$
of~\eqref{B}, arising from the same initial datum $(u_0,\theta_0)\in \mathscr{S'}(\R^3)^3\times \mathscr{S}'(\R^3)$.
Letting $u=u_1-u_2$ and $\theta=\theta_1-\theta_2$, we then obtain
$(u,\theta)\in X_{T,r}$ and
\begin{equation}
\label{eq:unicite}
\begin{split}
u&=B(u,u_1)+B(u_2,u)+L(\theta),\\
\theta&=C(u_1,\theta)+C(u,\theta_2).
\end{split}
\end{equation}

The maximal regularity theorem allows us to obtain all the relevant estimates for the
operators $B$ and $C$ and~$L$.

\begin{proposition}
\label{prop:B}
For all $\varepsilon>0$, there exists $r>0$ such that for all 
$v,w\in \C([0,T],L^3(\R^3)^3) +r\, L^\infty([0,T],L^3(\R^3)^3)$, 
and for all $1<p<\infty$,
there exists 
$\tau=\tau(\varepsilon,p,v,w)>0$ such that the linear operator
\begin{align}
B(\cdot,v)+B(w,\cdot)\colon &L^4(0,\tau; L^6({\mathds{R}}^3)^3)\longrightarrow L^4(0,\tau; L^6({\mathds{R}}^3)^3),
\label{eq:L4L6}\\
B(\cdot,v)+B(w,\cdot)\colon &L^p(0,\tau; L^3({\mathds{R}}^3)^3)\longrightarrow L^p(0,\tau; L^3({\mathds{R}}^3)^3)
\label{eq:LpL3}
\end{align}
is bounded, with operator norm less than~$\varepsilon$.
\end{proposition}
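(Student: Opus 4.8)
The plan is to reduce everything, via Theorem~\ref{thm:regmax}, to two elementary bounds on~$B$: one that is scale invariant and exploits the smallness of the $L^\infty(L^3)$–norm of one argument, and one that produces an explicit gain of a positive power of~$\tau$ when one argument is bounded in space. The starting observation is that, since $\P\nabla\cdot$ is a constant–coefficient Fourier multiplier homogeneous of degree~$1$, we may write $\P\nabla\cdot=(-\Delta)^{1/2}\mathcal{T}$, where $\mathcal{T}$ is a matrix of zeroth–order operators (Riesz–type transforms) bounded on $L^q(\R^3)$ for every $1<q<\infty$. Commuting these constant–coefficient operators through the heat semigroup, $B(u,v)=-\,\mathcal{T}\,(-\Delta)^{1/2}R(u\otimes v)$, with $R$ the operator of Theorem~\ref{thm:regmax}. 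The same identity applies to $B(w,\cdot)$, so the two summands are treated symmetrically throughout.

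First I would record the scale–invariant estimate. If $k\in L^\infty(0,T;L^3(\R^3))$, then Hölder gives $u\otimes k\in L^p(0,\tau;L^{3/2})$ when $u\in L^p(0,\tau;L^3)$, and $u\otimes k\in L^4(0,\tau;L^2)$ when $u\in L^4(0,\tau;L^6)$. Since $(-\Delta)^{1/2}B(u,k)=\mathcal{T}\,\Delta R(u\otimes k)$ and $\Delta R$ is bounded on $L^p(L^q)$, the Sobolev embeddings $\dot W^{1,3/2}(\R^3)\hookrightarrow L^3(\R^3)$ and $\dot W^{1,2}(\R^3)\hookrightarrow L^6(\R^3)$ yield, in both target spaces $Y\in\{L^p(0,\tau;L^3),\,L^4(0,\tau;L^6)\}$, a bound of the form $\|B(u,k)\|_{Y}+\|B(k,u)\|_{Y}\le C\,\|k\|_{L^\infty(L^3)}\,\|u\|_{Y}$, with $C$ independent of~$\tau$.

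Next comes the key point: the $\tau$–gain for a factor bounded in space. If $g\in L^\infty(0,T;L^\infty(\R^3))$, then $\|u\otimes g\|_{Y}\le\|g\|_{L^\infty(L^\infty)}\|u\|_{Y}$ with the \emph{same} spatial exponent, so no integrability can be won from $\Delta R$; instead I would trade half a spatial derivative for a power of~$\tau$. Using the mixed estimate of Theorem~\ref{thm:regmax} with $\alpha=\tfrac12$, the fact that $R$ commutes with the causal fractional time–integral $J^{1/2}$ (both are convolutions in~$t$), and $\bigl(\tfrac{{\rm d}}{{\rm d}t}\bigr)^{1/2}J^{1/2}=\mathrm{Id}$ on functions vanishing at $t=0$, one factors
\[
(-\Delta)^{1/2}R=\Bigl[(-\Delta)^{1/2}\bigl(\tfrac{{\rm d}}{{\rm d}t}\bigr)^{1/2}R\Bigr]\circ J^{1/2}.
\]
The bracket is bounded on $L^p(L^q)$, while $J^{1/2}$, being convolution with $\Gamma(\tfrac12)^{-1}t^{-1/2}\mathds{1}_{t>0}$, satisfies $\|J^{1/2}h\|_{Y}\le C\,\tau^{1/2}\|h\|_{Y}$ by Young's inequality in time. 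Since $B(u,g)=-\mathcal{T}(-\Delta)^{1/2}R(u\otimes g)$ and $\mathcal{T}$ is bounded, this gives $\|B(u,g)\|_{Y}+\|B(g,u)\|_{Y}\le C\,\tau^{1/2}\,\|g\|_{L^\infty(L^\infty)}\,\|u\|_{Y}$.

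Finally I would assemble the two bounds. Given $\varepsilon>0$, choose $r$ so small that the rough parts of $v,w\in\C([0,T],L^3)+r\,L^\infty(0,T;L^3)$ contribute less than $\varepsilon/3$ through the scale–invariant estimate. The continuous parts $v_c,w_c\in\C([0,T];L^3)$ have compact range, so spatial mollification produces splittings $v_c=g+h$ (and likewise for $w_c$) with $g\in L^\infty(0,T;L^\infty)$ and $\sup_t\|h(t)\|_{L^3}\le\delta$; choosing $\delta$ small makes the $h$–contribution less than $\varepsilon/3$ via the scale–invariant estimate, and then choosing $\tau$ small (depending on $\varepsilon,p$ and on $\|g\|_{L^\infty(L^\infty)}$, hence on $v,w$) makes the $g$–contribution less than $\varepsilon/3$. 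Summing yields operator norm $<\varepsilon$ on both $L^4(0,\tau;L^6)$ and $L^p(0,\tau;L^3)$. The hard part is the third paragraph: justifying the operator identity, the commutation of $R$ with the fractional time–integral, and the $\tau^{1/2}$ gain of $J^{1/2}$. This is precisely the ``original'' use of maximal regularity announced in the introduction, and it is what lets one absorb a factor that is bounded but only $L^\infty$ in space on a short time interval.
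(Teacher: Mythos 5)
Your proof is correct and follows essentially the same route as the paper's: the same splitting of $v,w$ into a piece that is small in $L^\infty(0,T;L^3)$ (absorbed by a scale-invariant maximal regularity bound, which fixes $r$) and a spatially bounded piece whose contribution gains a positive power of $\tau$. The only divergence is in that last sub-step, where the paper obtains the $\tau$-gain more elementarily, writing the bounded-part contribution as $(-\Delta)^{\beta}Rg$ with $\beta<1$ and using $\|(-\Delta)^{\beta}e^{t\Delta}\|_{\mathscr{L}(L^q)}\lesssim t^{-\beta}$ together with Young's inequality in time, instead of your factorization through the mixed estimate $(-\Delta)^{1/2}\bigl(\tfrac{{\rm d}}{{\rm d}t}\bigr)^{1/2}R$ composed with the fractional integral $J^{1/2}$; both are valid.
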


\begin{proof}
Let $r>0$, to be chosen later. 
For $v,w\in \C([0,T],L^3(\R^3)^3) +r\, L^\infty(0,T;L^3(\R^3)^3)$, we can find
$v_r, w_r \in \C_c([0,T]\times\mathds{R}^3)$ such that 
\begin{equation}
\label{eq:vr,wr}
\esssup_{[0,T]}\|v-v_r\|_{L^3}+\esssup_{[0,T]}\|w-w_r\|_{L^3}\le 3r.
\end{equation}
Let us introduce the functions~$f$ and $g$ defined by
\begin{equation}
\label{eq:def-f}
f(s)=(-\Delta)^{-1}{\mathbb{P}}\bigl(\nabla\cdot\bigl(u(s)\otimes (v-v_r)(s) + (w-w_r)(s)\otimes u(s)\bigr)\bigr), \quad s\in[0,T]
\end{equation}
and 
\[
g(s)=-(-\Delta)^{-3/4}{\mathbb{P}}\bigl(\nabla\cdot
\bigl(u(s)\otimes v_r(s) +w_r(s)\otimes u(s)\bigr)\bigr), \quad s\in[0,T].
\]
We have
\begin{equation}
\label{eq:B-R}
B(u,v)+B(w,u)=\Delta Rf +(-\Delta)^{3/4}Rg,
\end{equation}
where $R$ is the vector-valued analogue of the scalar operator defined in Theorem~\ref{thm:regmax}.

\begin{itemize}
\item[(i)]
Let us first consider~\eqref{eq:L4L6}.
We easily see that the norm of $f$ in $L^4(0,\tau; L^6({\mathds{R}}^3)^3)$ 
is bounded by the norm 
of $u\otimes (v-v_r)+(w-w_r)\otimes u$ in 
$L^4(0,\tau; L^{2}({\mathds{R}}^3)^{3\times 3})$.
Indeed, the operator 
$(-\Delta)^{-1}{\mathbb{P}}\nabla\cdot$ is bounded from 
$L^2({\mathds{R}}^3)^{3\times 3}$ to $L^6({\mathds{R}}^3)^3$.
Hence, 
\begin{align*}
\|\Delta Rf\|_{L^4(0,\tau;L^6({\mathds{R}}^3)^3)}
&\le C_{4,6} \|u\otimes (v-v_r)+(w-w_r)\otimes u\|_{L^4(0,\tau; L^2({\mathds{R}}^3)^{3\times 3})}
\\
&\le 3rC_{4,6}  \|u\|_{L^4(0,\tau; L^6({\mathds{R}}^3)^3)}.
\end{align*}
The norm of $g$ in $L^4(0,\tau; L^6({\mathds{R}}^3)^3)$ is bounded by the norm of 
$u\otimes v_r+w_r\otimes u$ in 
$L^4(0,\tau; L^3({\mathds{R}}^3)^{3\times 3})$.
To see this, first observe that the operator $(-\Delta)^{-3/4}{\mathbb{P}}\nabla\cdot$ is bounded from 
$L^3({\mathds{R}}^3)^{3\times 3}$ to $L^6({\mathds{R}}^3)^3$.
Moreover, 
$\|(-\Delta)^{3/4}e^{t\Delta}\|_{\mathscr{L}(L^6(\R^3)^3)}\lesssim t^{-3/4}$. 
As $(-\Delta)^{3/4}R$ is a convolution operator, we have
\begin{align*}
\|(-\Delta)^{3/4}Rg\|_{L^4(0,\tau;L^6({\mathds{R}}^3)^3)}&\le 
c\,\|t\mapsto (-\Delta)^{3/4}e^{t\Delta}\|_{L^1(0,\tau;{\mathscr{L}}(L^6({\mathds{R}}^3)^3))}\|g\|_{L^4(0,\tau; L^6({\mathds{R}}^3)^3)}
\\
&\le c'\, \tau^{\frac{1}{4}}\|u\otimes v_r+w_r\otimes u\|_{L^4(0,\tau; L^3({\mathds{R}}^3)^{3\times 3})}
\\
&\le c'\,\tau^{\frac{1}{4}}\bigl(\|v_r\|_{L^\infty((0,\tau)\times{\mathds{R}}^3)^3)}+\|w_r\|_{L^\infty((0,\tau)\times{\mathds{R}}^3)^3)}\bigr)
\|u\|_{L^4(0,\tau; L^6({\mathds{R}}^3)^3)}.
\end{align*}
We first choose $r>0$,  such that $3rC_{4,6} \le \frac{\varepsilon}{2}$, next
$v_r$ and $w_r$ in $\C_c([0,T]\times \R^3)$,
satisfying~\eqref{eq:vr,wr} and last $\tau>0$ such that 
\[
c'\,\tau^{\frac{1}{4}}\bigl(\|v_r\|_{L^\infty((0,\tau)\times{\mathds{R}}^3)^3)}+\|w_r\|_{L^\infty((0,\tau)\times{\mathds{R}}^3)^3)}\bigr)
\le \frac{\varepsilon}{2}.
\]
This finally establishes~\eqref{eq:L4L6}.
\item[(ii)]
Let us now consider assertion~\eqref{eq:LpL3}. 
Let $1<p<\infty$. 
We slightly modify the expression of 
$B(\cdot,v)+B(w,\cdot)$ given by~\eqref{eq:B-R}:
\begin{equation}
\label{eq:B-R'}
B(u,v)+B(w,u)=\Delta Rf +(-\Delta)^{1/2}R\tilde{g},
\end{equation}
where we set
\begin{equation}
\label{eq:def-tildeg}
\tilde{g}(s)=-(-\Delta)^{-1/2}{\mathbb{P}}\bigl(\nabla\cdot
\bigl(u(s)\otimes v_r(s) +w_r(s)\otimes u(s)\bigr)\bigr), \quad s\in[0,T]
\end{equation}
The function $f$ defined by~\eqref{eq:def-f} is bounded in $L^p(0,\tau;L^3(\R^3)^3)$ by
$3r\|u\|_{L^p(0,\tau;L^3(\R^3)^3)}$, up to a multiplicative constant involving $C_{p,3}$ 
and the norm of bounded operator  
$(-\Delta)^{-1}{\mathbb{P}}\nabla\cdot$ from $L^{\frac{3}{2}}(\R^3)^{3\times 3}$ to $L^3(\R^3)^3$. 
The norm of $\tilde g$ in $L^p(0,\tau;L^3(\R^3)^3)$ 
is bounded by the norm of $u\otimes v_r+w_r\otimes u$ in $L^p(0,\tau; L^3(\R^3)^{3\times 3})$.
Indeed, the operator $(-\Delta)^{-\frac{1}{2}}{\mathbb{P}}\nabla \cdot$ is bounded from $L^3(\R^3)^{3\times 3}$ in $L^3(\R^3)^3$ and so
\begin{align*}
\|(-\Delta)^{1/2}R\tilde{g}\|_{L^p(0,\tau;L^3)}&\le 
c \|t\mapsto (-\Delta)^{1/2}e^{t\Delta}\|_{L^1(0,\tau;{\mathscr{L}}(L^3))}\|g\|_{L^p(0,\tau; L^3)}\\
&\lesssim \sqrt{\tau} \bigl(\|v_r\|_{L^\infty((0,\tau)\times{\mathds{R}}^3)^3}+\|w_r\|_{L^\infty((0,\tau)\times{\mathds{R}}^3)^3}\bigr)
\|u\|_{L^p(0,\tau; L^3)}.
\end{align*}
Proceeding as in item (i) settles~\eqref{eq:LpL3}.
\end{itemize}
This establishes~Proposition~\ref{prop:B}.
\end{proof}

\begin{remark}
Notice that if one assumes that $v$ and $w$ belong to the larger
space $L^\infty(0,T;L^3(\R^3)^3)$, and if $r>0$ is fixed, then in general one cannot
ensure the existence of $v_r$ and $w_r$ in $L^\infty((0,T)\times \R^3)$
such that $\esssup_{t\in(0,T)} \|v(t)-v_r(t)\|_{L^3}+\esssup_{t\in(0,T)}\|w(t)-w_r(t)\|_{L^3}<3r$.
This is the case if, for example, $v$ or $w$ are of the form $t^{-1}\phi(\cdot/t)$
with $\phi\in L^3(\R^3)$ and $r$ is small with respect to~$\|\phi\|_{L^3}$.
\end{remark}

\begin{proposition}\mbox{}
\label{prop:C}
\begin{enumerate}
\item
For all $\varepsilon>0$, there exists $r>0$ such that
all $v\in \C([0,T],L^3(\R^3)^3) +r\, L^\infty(0,T;L^3(\R^3)^3)$, 
there exists $\tau=\tau(\varepsilon,v)>0$ such that 
\[
C(v,\cdot):L^{\frac{4}{3}}(0,\tau;L^2({\mathds{R}}^3))\longrightarrow L^{\frac{4}{3}}(0,\tau;L^2({\mathds{R}}^3))\quad
\mbox{is bounded with norm less than }\varepsilon.
\]
\item 
For all $\varepsilon>0$ there exists $r>0$ such that, for all $v\in \C([0,T],L^3(\R^3)^3) +r\, L^\infty(0,T;L^3(\R^3)^3)$, 
there exists $\tau=\tau(\varepsilon,v)>0$ such that
\[
C(v,\cdot): L^2(0,\tau;L^{\frac{3}{2}}({\mathds{R}}^3))\cap
 L^{\frac{4}{3}}(0,\tau;L^2({\mathds{R}}^3)) 
 \longrightarrow
L^2(0,\tau;L^{\frac{3}{2}}({\mathds{R}}^3)) \quad \mbox{is bounded}
\]
with norm less than $\varepsilon$.
\item
For all $\varepsilon>0$ all $v\in L^4(0,T;L^6({\mathds{R}}^3))^3$, there exists
$\tau=\tau(\varepsilon,v)>0$ such that
\[
C(v,\cdot):L^2(0,\tau;L^{\frac{3}{2}}({\mathds{R}}^3))\longrightarrow L^2(0,\tau;L^{\frac{3}{2}}({\mathds{R}}^3))\quad
\mbox{is bounded with norm less than }\varepsilon.
\]
\item
For all $\varepsilon>0$, all $\vartheta\in L^2(0,T;L^{\frac{3}{2}}({\mathds{R}}^3))$, there exists $\tau=\tau(\varepsilon,\vartheta)>0$
such that
\[
C(\cdot,\vartheta):L^4(0,\tau;L^6({\mathds{R}}^3)^3)\longrightarrow L^{\frac{4}{3}}(0,\tau;L^2({\mathds{R}}^3))\quad
\mbox{is bounded with norm less than }\varepsilon.
\]
\end{enumerate}
\end{proposition}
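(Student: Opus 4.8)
The plan is to treat all four mapping properties by the mechanism used in the proof of Proposition~\ref{prop:B}: rewrite $C$ through the operator $R$ of Theorem~\ref{thm:regmax} and suitable powers of $-\Delta$, then combine the maximal regularity estimate for the ``$\Delta R$'' part with an operator-valued convolution estimate for the remaining part. The crucial algebraic identity is ${\rm div}=\Delta(-\Delta)^{-1}{\rm div}=(-\Delta)^{1/2}\bigl[(-\Delta)^{-1/2}{\rm div}\bigr]$, which lets me distribute the single derivative in ${\rm div}(\theta v)$ either onto $\Delta R$ (controlled by Theorem~\ref{thm:regmax}) or onto a convolution kernel $(-\Delta)^{1/2}e^{t\Delta}$, whose $\mathscr L(L^q)$-norm is $\lesssim t^{-1/2}$. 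Throughout I use that the zero-order Riesz operator $(-\Delta)^{-1/2}{\rm div}$ is bounded on every $L^q(\R^3)$, $1<q<\infty$, and that $(-\Delta)^{-1}{\rm div}$ maps $L^a(\R^3)^3\to L^b(\R^3)$ whenever $\tfrac1b=\tfrac1a-\tfrac13$; the precise Lebesgue exponents are dictated in each item by Hölder's inequality on the product $\theta v$.

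For items (1) and (2), where $v$ lies in $\C([0,T],L^3)+r\,L^\infty(0,T;L^3)$, I mimic Proposition~\ref{prop:B}: pick $v_r\in\C_c([0,T]\times\R^3)$ with $\esssup_{[0,T]}\|v-v_r\|_{L^3}\le 3r$ and split
\[
C(v,\theta)=\Delta R f+(-\Delta)^{1/2}R\tilde g,\qquad f=(-\Delta)^{-1}{\rm div}\bigl(\theta\,(v-v_r)\bigr),\quad \tilde g=-(-\Delta)^{-1/2}{\rm div}\bigl(\theta\,v_r\bigr).
\]
The first term is estimated by $\|\Delta Rf\|_{L^p(L^q)}\le C_{p,q}\|f\|_{L^p(L^q)}$ and the mapping property of $(-\Delta)^{-1}{\rm div}$, producing a factor $\esssup\|v-v_r\|_{L^3}\le 3r$: for item~(1) I pair $\theta\in L^{4/3}(L^2)$ with $v-v_r\in L^\infty(L^3)$, so $\theta(v-v_r)\in L^{4/3}(L^{6/5})$ and $f\in L^{4/3}(L^2)$, while for item~(2) I pair $\theta\in L^2(L^{3/2})$ with $v-v_r\in L^\infty(L^3)$, so $\theta(v-v_r)\in L^2(L^1)$ and $f\in L^2(L^{3/2})$. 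The second term is estimated by Young's inequality for the convolution with $(-\Delta)^{1/2}e^{t\Delta}$, whose operator norm $t^{-1/2}$ is integrable on $(0,\tau)$ with $L^1$-norm $\sim\tau^{1/2}$, together with boundedness of $(-\Delta)^{-1/2}{\rm div}$ on the relevant $L^q$; this produces a factor $\tau^{1/2}\|v_r\|_{L^\infty((0,\tau)\times\R^3)}$. Choosing first $r$ small, then $\tau$ small, makes each contribution $\le\varepsilon/2$. Since the smallness in item~(2) already follows from the $L^2(L^{3/2})$-component of the norm, the stated bound holds a fortiori on the intersection $L^2(L^{3/2})\cap L^{4/3}(L^2)$.

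Items (3) and (4) involve no splitting parameter $r$; here the smallness is extracted from absolute continuity of the driving norm on short time intervals. For item~(4) I use $C(u,\vartheta)=\Delta R\bigl[(-\Delta)^{-1}{\rm div}(\vartheta u)\bigr]$: since $\vartheta\in L^2(L^{3/2})$ and $u\in L^4(L^6)$ give $\vartheta u\in L^{4/3}(L^{6/5})$, hence $(-\Delta)^{-1}{\rm div}(\vartheta u)\in L^{4/3}(L^2)$, the maximal regularity estimate on $L^{4/3}(L^2)$ yields $\|C(u,\vartheta)\|_{L^{4/3}(L^2)}\lesssim\|\vartheta\|_{L^2(0,\tau;L^{3/2})}\,\|u\|_{L^4(0,\tau;L^6)}$, and $\|\vartheta\|_{L^2(0,\tau;L^{3/2})}\to0$ as $\tau\to0$ finishes it. Item~(3) is the only genuinely borderline case: estimating $\|C(v,\theta)(t)\|_{L^{3/2}}\le\int_0^t\|e^{(t-s)\Delta}{\rm div}\|_{\mathscr L(L^{6/5},L^{3/2})}\|\theta v(s)\|_{L^{6/5}}\,{\rm d}s$, the kernel norm behaves like $(t-s)^{-3/4}$, which sits exactly at the endpoint of Young's inequality against $\theta v\in L^{4/3}(L^{6/5})$. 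I therefore invoke instead the one-dimensional Hardy--Littlewood--Sobolev inequality (equivalently, the fractional maximal regularity of Theorem~\ref{thm:regmax} with a parabolic Sobolev embedding), whose fractional-integration exponent sends $L^{4/3}(0,\tau)\to L^2(0,\tau)$, to obtain $\|C(v,\theta)\|_{L^2(L^{3/2})}\lesssim\|\theta\|_{L^2(L^{3/2})}\|v\|_{L^4(0,\tau;L^6)}$, again small for small $\tau$.

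I expect the main obstacle to be precisely this borderline time-convolution in item~(3): because the product $\theta v$ is only barely integrable, the single derivative in ${\rm div}(\theta v)$ must be split so that the negative-order Riesz operator raises the spatial integrability \emph{and} the residual heat-kernel convolution stays within reach of a fractional-integration rather than a Young estimate, the two constraints meeting exactly at the critical exponent $3/4$. The remaining work is the routine bookkeeping of Hölder and Riesz exponents ensuring that in each item every $(-\Delta)^{-\sigma}{\rm div}$ lands in the correct $L^q$ and every kernel $(-\Delta)^{\sigma}e^{t\Delta}$ is (sub)critically integrable in time.
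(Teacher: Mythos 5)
Your treatment of items (1) and (4) is sound, and item (4) is in fact cleaner than the paper's (you extract smallness directly from $\|\vartheta\|_{L^2(0,\tau;L^{3/2})}\to 0$ as $\tau\to0$, so no splitting $\vartheta=\vartheta_r+(\vartheta-\vartheta_r)$ is needed). Item (3) also works: your Young--O'Neil/HLS estimate in time against the kernel $(t-s)^{-3/4}\in L^{4/3,\infty}$ is legitimate (the exponents $1+\tfrac12=\tfrac34+\tfrac34$ are interior to the HLS range), and it is essentially the argument the paper uses for its Proposition~\ref{prop:lo}; the paper's own proof of item (3) instead reuses the mixed-derivative computation~\eqref{eq:comp-Rf} with $\|v-v_r\|_{L^4(L^6)}<r$ in place of~\eqref{eq:supvr}.

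The genuine gap is in item (2). You pair $\theta\in L^2(0,\tau;L^{3/2})$ with $v-v_r\in L^\infty(0,\tau;L^3)$, obtain $\theta(v-v_r)\in L^2(0,\tau;L^1)$, and then claim $f=(-\Delta)^{-1}{\rm div}\bigl(\theta(v-v_r)\bigr)\in L^2(0,\tau;L^{3/2})$. That step requires $(-\Delta)^{-1}{\rm div}\colon L^1(\R^3)^3\to L^{3/2}(\R^3)$, i.e.\ a Riesz transform composed with the Riesz potential $I_1$ acting on $L^1$ --- and both fail at this endpoint ($I_1$ maps $L^1$ only into weak $L^{3/2}$, and the Riesz transform is unbounded on $L^1$). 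This is precisely the obstruction the paper is built to circumvent (``the product $u\theta$ a priori just belongs to $L^1$''), and it is the whole reason the hypothesis of item (2) asks for $\theta$ in the \emph{intersection} $L^2(L^{3/2})\cap L^{4/3}(L^2)$: the paper's proof pairs $\theta\in L^{4/3}(0,\tau;L^2)$ with $v-v_r\in L^\infty(L^3)$, so that $\theta(v-v_r)\in L^{4/3}(0,\tau;L^{6/5})$ with $6/5>1$, applies $(-\Delta)^{-3/4}{\rm div}\colon L^{6/5}\to L^{3/2}$, and then upgrades the time integrability from $4/3$ to $2$ by the mixed fractional estimate $\bigl(\tfrac{\rm d}{{\rm d}t}\bigr)^{1/4}(-\Delta)^{3/4}R$ of Theorem~\ref{thm:regmax} together with the one-dimensional embedding $\dot W^{1/4,4/3}(0,\tau)\hookrightarrow L^2(0,\tau)$ (this is the ``original use'' of maximal regularity announced in the introduction, which your proposal never invokes). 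Your closing remark that the bound ``already follows from the $L^2(L^{3/2})$-component of the norm'' and hence holds ``a fortiori'' on the intersection is therefore exactly backwards: without the $L^{4/3}(L^2)$ component the estimate on the $f$-part cannot be closed.
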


\begin{proof}
We proceed as in the previous proposition. 
For $r>0$, let us choose $v_r\in \C_c([0,T]\times\mathds{R}^3)^3$ such that
\begin{equation}
\label{eq:supvr}
\esssup_{[0,T]}\|v-v_r\|_{L^3}\le 2r. 
\end{equation}
Then we have, for all $\theta\in L^{\frac{4}{3}}(0,\tau;L^2({\mathds{R}}^3))$,
\[
C(v,\theta)= C(v-v_r,\theta)+C(v_r,\theta)= \Delta R f + (-\Delta)^{1/2} Rg
\]
where, now, we set
$f(s)=(-\Delta)^{-1}{\rm div}\, \bigl(\theta(s)(v(s)-v_r(s))\bigr)$ 
and $g(s)= -(-\Delta)^{-1/2}{\rm div}\,\bigl(\theta(s)v_r(s)\bigr)$.
As in the proof of previous proposition we see that the norm of $f$
in $L^{\frac{4}{3}}(0,\tau ; L^2({\mathds{R}}^3))$
is bounded by the norm of $\bigl(\theta(v-v_r)\bigr)$ in 
$L^{\frac{4}{3}}(0,\tau ; L^{\frac{6}{5}}({\mathds{R}}^3)^3)$
(because of the Sobolev embedding $\dot W^{1,\frac{6}{5}}\hookrightarrow L^2$ in dimension~$3$).
As $\Delta R$ is a bounded operator in 
$L^{\frac{4}{3}}(0,\tau ; L^2({\mathds{R}}^3))$, we have
\begin{align*}
\|(-\Delta)Rf\|_{L^{\frac{4}{3}}(0,\tau ; L^2({\mathds{R}}^3))}
&\le C'_{\frac{4}{3},2}\, C\, \|\theta(v-v_r)\|_{L^{\frac{4}{3}}(0,\tau ; L^{\frac{6}{5}}({\mathds{R}}^3)^3)}\\
&\le 2rC'_{\frac{4}{3},2}\, C\,  \|\theta\|_{L^{\frac{4}{3}}(0,\tau ; L^2({\mathds{R}}^3))}
\end{align*}
where $C$ is the constant arising from the Sobolev embedding $\dot W^{1,\frac{6}{5}}\hookrightarrow L^2$ in 
dimension $3$ and $r$ comes from the choice of $v_r$.

The norm of $g$ in $L^{\frac{4}{3}}(0,\tau ; L^2({\mathds{R}}^3))$ is bounded by
the norm of 
$\theta \,v_r$ dans $L^{\frac{4}{3}}(0,\tau ; L^2({\mathds{R}}^3)^3)$,
because $(-\Delta)^{-\frac{1}{2}}{\rm div}\,$ is a bounded operator in
$L^2({\mathds{R}}^3)^3$.
Moreover, $\|(-\Delta)^{1/2}e^{t\Delta}\|_{\mathscr{L}(L^2(\R^3))}\lesssim t^{-1/2}$.
Then, viewing as before $(-\Delta)^{1/2}R$ as a convolution operator, we get 
\begin{align*}
\|(-\Delta)^{1/2}Rg\|_{L^{\frac{4}{3}}(0,\tau;L^2({\mathds{R}}^3))}&\le 
c\,\|t\mapsto (-\Delta)^{1/2}e^{t\Delta}\|_{L^1(0,\tau;{\mathscr{L}}(L^2({\mathds{R}}^3)))}\|g\|_{L^{\frac{4}{3}}(0,\tau; L^2({\mathds{R}}^3))}
\\
&\le c'\, \tau^{\frac{1}{2}}\|\theta\,v_r\|_{L^{\frac{4}{3}}(0,\tau; L^2({\mathds{R}}^3)^3)}
\\
&\le c'\,\tau^{\frac{1}{2}} \|v_r\|_{L^\infty((0,\tau)\times{\mathds{R}}^3)^3)}
\|\theta\|_{L^{\frac{4}{3}}(0,\tau; L^2({\mathds{R}}^3))}.
\end{align*}
We then choose $r>0$ such that $2rC'_{\frac{4}{3},2}\, C\le \frac{\varepsilon}{2}$, next 
$v_r\in {\mathscr{C}}_{\rm c}([0,T]\times{\mathds{R}}^3)^3$
satisfying  \eqref{eq:supvr} and last $\theta >0$ such that 
$c'\,\tau^{\frac{1}{2}} \|v_r\|_{L^\infty(((0,\tau)\times\mathds{R}^3)^3)}
\le\frac{\varepsilon}{2}$.
This establishes the first assertion of the proposition.

\medskip
To prove the second assertion we proceed as before: for $r>0$, we choose 
$v_r\in \C([0,T]\times\mathds{R}^3)^3$ such that~\eqref{eq:supvr}
holds. 
Then for all $\theta\in L^2(0,\tau;L^{\frac{3}{2}}({\mathds{R}}^3))$,
\begin{equation}
\label{eq:dec-C}
C(v,\theta)= C(v-v_r,\theta)+C(v_r,\theta)
= (-\Delta)^{\frac{3}{4}} R f + (-\Delta)^{1/2} Rg
\end{equation}
where $f(s)=-(-\Delta)^{-\frac{3}{4}}{\rm div}\, \bigl(\theta(s)(v(s)-v_r(s))\bigr)$ 
and $g(s)= -(-\Delta)^{-1/2}{\rm div}\,\bigl(\theta(s)v_r(s)\bigr)$.
We easily see that the norm of $f$ in $L^{\frac{4}{3}}(0,\tau ; L^{\frac{3}{2}}({\mathds{R}}^3))$ is controlled by the norm 
of $\bigl(\theta(v-v_r)\bigr)$ in $L^{\frac{4}{3}}(0,\tau ; L^{\frac{6}{5}}({\mathds{R}}^3)^3)$ (owing to the Sobolev embedding 
$\dot W^{\frac{1}{2},\frac{6}{5}}\hookrightarrow L^{\frac{3}{2}}$ in dimension~$3$).
As $\bigl(\frac{{\rm d}}{{\rm d}t}\bigr)^{\frac{1}{4}}(-\Delta)^{\frac{3}{4}}R$ is a bounded operator in $L^{\frac{4}{3}}(0,\tau ; L^{\frac{3}{2}}({\mathds{R}}^3))$, 
we have
\begin{equation}
\begin{split}
\label{eq:comp-Rf}
\|(-\Delta)^{\frac{3}{4}}Rf\|_{L^2(0,\tau ; L^{\frac{3}{2}}({\mathds{R}}^3))}
&\le \tilde{C}\bigl\|\bigl(\tfrac{{\rm d}}{{\rm d}t}\bigr)^{\frac{1}{4}}(-\Delta)^{\frac{3}{4}}Rf\bigl\|_{L^{\frac{4}{3}}(0,\tau ; L^{\frac{3}{2}})}\\
&\le \tilde{C}C_{\frac{4}{3},\frac{3}{2}}\, C\, \|\theta(v-v_r)\|_{L^{\frac{4}{3}}(0,\tau ; L^{\frac{6}{5}}({\mathds{R}}^3)^3)}\\
&\le \tilde{C}C_{\frac{4}{3},\frac{3}{2}}\, C\, (r+\eta) \|\theta\|_{L^{\frac{4}{3}}(0,\tau ; L^2({\mathds{R}}^3))}.
\end{split}
\end{equation}
Here $\tilde{C}$ is the constant coming from the Sobolev embedding $\dot W^{\frac{1}{4},\frac{4}{3}}\hookrightarrow L^2$ 
in dimension~$1$,
$C$ is the constant of the Sobolev embedding $\dot W^{\frac12,\frac{6}{5}}\hookrightarrow L^\frac32$ in dimension~$3$ and 
$r$ comes from the choice of $v_r$.
The norm of $g$ in $L^2(0,\tau ; L^{\frac{3}{2}}({\mathds{R}}^3))$ is controlled by the norm of $\theta \,v_r$ in 
$L^2(0,\tau ; L^{\frac{3}{2}}({\mathds{R}}^3)^3)$, because
$(-\Delta)^{-\frac{1}{2}}{\rm div}\,$ is a bounded operator from 
$L^{\frac{3}{2}}({\mathds{R}}^3)^3$ to $L^{\frac{3}{2}}({\mathds{R}}^3)$. 
As $(-\Delta)^{\frac12}R$ is a convolution operator, we can write 
\begin{equation}
\begin{split}
\label{eq:comp-Rg}
\|(-\Delta)^{\frac12}Rg\|_{L^2(0,\tau;L^{\frac{3}{2}}({\mathds{R}}^3))}&\le 
c\,\|t\mapsto (-\Delta)^{\frac12}e^{t\Delta}\|_{L^1(0,\tau;{\mathscr{L}}(L^{\frac{3}{2}}({\mathds{R}}^3)))}
\|g\|_{L^2(0,\tau; L^{\frac{3}{2}}({\mathds{R}}^3))}
\\
&\le c'\, \tau^{\frac{1}{2}}\|\theta\,v_r\|_{L^2(0,\tau; L^{\frac{3}{2}}({\mathds{R}}^3)^3)}
\\
&\le c'\,\tau^{\frac{1}{2}} \|v_r\|_{L^\infty((0,\tau)\times{\mathds{R}}^3)^3)}
\|\theta\|_{L^2(0,\tau; L^{\frac{3}{2}}({\mathds{R}}^3))}.
\end{split}
\end{equation}
It just remains to choose $r,\eta>0$ such that 
$2r\tilde{C} C_{\frac{4}{3},\frac{3}{2}}\, C\le \frac{\varepsilon}{2}$, next 
$v_r\in L^\infty((0,T)\times{\mathds{R}}^3)^3$
such that~\eqref{eq:supvr} holds and finally $\theta>0$ such that
$c'\,\tau^{\frac{1}{2}} \|v_r\|_{L^\infty((0,\tau)\times{\mathds{R}}^3)^3)}
\le\frac{\varepsilon}{2}$. This establishes the second assertion of the proposition.

\medskip
Let us prove the third assertion.
As $v\in L^4(0,T;L^6(\R^3)^3)$, for an arbitrary 
$r>0$ we can choose now $v_r\in L^\infty((0,T)\times{\mathds{R}}^3)^3$ such that
\begin{equation}
\label{eq:new-r}
\|v-v_r\|_{L^4(0,T;L^6(\R^3)^3)}<r.
\end{equation} 
If $\tau>0$ and $\theta\in L^2(0,\tau;L^\frac32(\R^3))$, then
$(v-v_r)\theta\in L^\frac43(0,\tau;L^\frac65(\R^3)^3)$ by H\"older inequality.
Therefore, splitting~$C(v,\theta)$ as in~\eqref{eq:dec-C}, 
the above computations~\eqref{eq:comp-Rf}-\eqref{eq:comp-Rg}
can be reproduced: the only change that needs to be done is  
the application of~\eqref{eq:new-r} instead of~\eqref{eq:supvr}.
We get in this way
\begin{align*}
\|(-\Delta)^{\frac{3}{4}}Rf\|_{L^2(0,\tau ; L^{\frac{3}{2}}({\mathds{R}}^3))}
&\le \tilde{C}C_{\frac{4}{3},\frac{3}{2}}\, C\, r 
 \|\theta\|_{L^2(0,\tau ; L^\frac32({\mathds{R}}^3))}.
\end{align*}
This, combined with~\eqref{eq:comp-Rg} proves our third assertion.

\medskip

The proof of the fourth assertion follows the same scheme.
For $r>0$, choose 
$\vartheta_r\in \mathscr{C}_c([0,T]\times\mathds{R}^3)$ such that 
\begin{equation}
\label{eq:thetar}
\|\vartheta-\vartheta_r\|_{L^2(0,T;L^{\frac{3}{2}}({\mathds{R}}^3))}\le r.
\end{equation}
Then, for all $v\in L^4(0,\tau;L^6({\mathds{R}}^3)^3)$, we have 
\begin{equation*}
C(v,\vartheta)=C(v,\vartheta-\vartheta_r)+C(v,\vartheta_r)=\Delta Rf+(-\Delta)^{1/2} Rg
\end{equation*}
with $f(s)=(-\Delta)^{-1}{\rm div}\,\bigl((\vartheta(s)-\vartheta_r(s))v(s)\bigr)$ 
and $g(s)=-(-\Delta)^{-\frac{1}{2}}{\rm div}\,\bigl(\vartheta_r(s)v(s)\bigr)$. 
One easily shows that the norm of~$f$ in
$L^{\frac{4}{3}}(0,\tau; L^2({\mathds{R}}^3))$ is bounded by
$C r\|v\|_{L^4(0,\tau;L^6({\mathds{R}}^3)^3)}$, where
$C$ is the norm of $(-\Delta)^{-1}{\rm div}\,:L^{6/5}({\mathds{R}}^3)^3\to L^2({\mathds{R}}^3)$. 
Thus, as the operateur
$\Delta R$ is bounded on $L^{\frac{4}{3}}(0,\tau; L^2({\mathds{R}}^3))$ by $C'_{\frac{4}{3},2}=C\,C_{\frac43,2}$, we have 
\[
\|\Delta Rf\|_{L^{\frac{4}{3}}(0,\tau; L^2({\mathds{R}}^3))}\le C'_{\frac{4}{3},2} r\|v\|_{L^4(0,\tau;L^6({\mathds{R}}^3)^3)}.
\]
The norm of $g$ in $L^{\frac{4}{3}}(0,\tau; L^2({\mathds{R}}^3))$ is bounded by
the norm of $\vartheta_r$ in $L^2(0,\tau;L^3({\mathds{R}}^3))$ and the norm of
$v$ in $L^4(0,\tau;L^6({\mathds{R}}^3)^3)$. 
As $(-\Delta)^{1/2} R$ is a convolution operator, we deduce that
\begin{align*}
\|(-\Delta)^{\frac{1}{2}}Rg\|_{L^{\frac{4}{3}}(0,\tau; L^2({\mathds{R}}^3))}&\le
\|t\mapsto (-\Delta)^{1/2}e^{t\Delta}\|_{L^1(0,\tau;{\mathscr{L}}(L^2({\mathds{R}}^3)))}\|g\|_{L^{\frac{4}{3}}(0,\tau; L^2({\mathds{R}}^3))}\\
&\le c\,\tau^{\frac{1}{2}} \|\vartheta_r\|_{L^2(0,\tau;L^3({\mathds{R}}^3))}\|v\|_{L^4(0,\tau;L^6({\mathds{R}}^3)^3)}.
\end{align*}
We then choose $r>0$ such that $C'_{\frac{4}{3},2} r\le \frac{\varepsilon}{2}$, 
next $\vartheta_r\in {\mathscr{C}}_{\rm c}([0,T]\times\mathds{R}^3)$
satisfying~\eqref{eq:thetar} and last $\tau>0$ such that 
$c\,\tau^{\frac{1}{2}} \|\vartheta_r\|_{L^2(0,\tau;L^3({\mathds{R}}^3))}\le \frac{\varepsilon}{2}$.
We thus get the last assertion of the proposition.
\end{proof}

\begin{proposition}
\label{prop:L}
For all $\tau>0$, the operator $L$ defined by \eqref{def:L} is linear and bounded
from  
$L^2(0,\tau;L^{\frac{3}{2}}({\mathds{R}}^3))$ to $L^4(0,\tau; L^6({\mathds{R}}^3)^3$ and from $L^{\frac{4}{3}}(0,\tau;L^2({\mathds{R}}^3))$ to 
$L^4(0,\tau; L^6({\mathds{R}}^3)^3)$, with operator norms independent on~$\tau$. 
Moreover, for all $p\in[1,\infty)$, $L$ is bounded from 
$L^2(0,\tau;L^{\frac32}(\R^3))$ to $L^p(0,\tau;L^3(\R^3))$, 
with norm of order $\tau^{1/p}$.
\end{proposition}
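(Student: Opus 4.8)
The plan is to bypass the convolution-plus-Young estimates used in the previous propositions (which are intrinsically $\tau$-dependent) and instead exploit the scale invariance of the three target estimates directly, through the pointwise-in-time smoothing of the heat semigroup together with the Hardy--Littlewood--Sobolev inequality. The starting observation is that $L(\theta)(t)=\int_0^t e^{(t-s)\Delta}\mathbb{P}(\theta(s)e_3)\,{\rm d}s$ and that $\mathbb{P}$ is a bounded Fourier multiplier on $L^q(\R^3)$ for every $1<q<\infty$; since $\mathbb{P}$ commutes with $e^{\sigma\Delta}$, Young's inequality applied to the Gaussian kernel gives the smoothing bound $\|e^{\sigma\Delta}\mathbb{P}(\theta e_3)\|_{L^b}\lesssim \sigma^{-\frac32(1/a-1/b)}\|\theta\|_{L^a}$ for $1<a\le b<\infty$.

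For the two $L^4(0,\tau;L^6)$ estimates I would reduce everything to a one-dimensional fractional integral in time. Integrating the smoothing bound yields, pointwise in $t$,
\[
\|L(\theta)(t)\|_{L^6}\lesssim \int_0^t (t-s)^{-3/4}\|\theta(s)\|_{L^{3/2}}\,{\rm d}s
\qquad\text{and}\qquad
\|L(\theta)(t)\|_{L^6}\lesssim \int_0^t (t-s)^{-1/2}\|\theta(s)\|_{L^2}\,{\rm d}s,
\]
using $\tfrac1{3/2}-\tfrac16=\tfrac12$ and $\tfrac12-\tfrac16=\tfrac13$ respectively. Extending $s\mapsto\|\theta(s)\|$ by zero outside $(0,\tau)$, the right-hand sides are dominated by Riesz potentials of order $\tfrac14$ and $\tfrac12$ on the line. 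The Hardy--Littlewood--Sobolev inequality then provides the maps $L^2\to L^4$ (since $\tfrac14=\tfrac12-\tfrac14$) and $L^{4/3}\to L^4$ (since $\tfrac14=\tfrac34-\tfrac12$) in the time variable, both in the non-endpoint regime $1<p_1<p_2<\infty$. This establishes the first two assertions with constants equal to the universal Hardy--Littlewood--Sobolev constants, hence independent of $\tau$.

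For the last assertion I proceed identically: from $\tfrac1{3/2}-\tfrac13=\tfrac13$ one gets $\|L(\theta)(t)\|_{L^3}\lesssim \int_0^t (t-s)^{-1/2}\|\theta(s)\|_{L^{3/2}}\,{\rm d}s$, again a Riesz potential of order $\tfrac12$ acting on an $L^2$ function of time. Here the scale-invariant target would be $L^\infty$, which is precisely the endpoint where Hardy--Littlewood--Sobolev fails; this is the one genuine obstacle. I would circumvent it by retaining the finite interval $(0,\tau)$ and invoking Young's convolution inequality instead: the kernel $u\mapsto u^{-1/2}\mathbf{1}_{(0,\tau)}(u)$ lies in $L^r(0,\tau)$ for every $r<2$, with $\|\cdot\|_{L^r(0,\tau)}\sim \tau^{1/r-1/2}$, and choosing $r=\tfrac{2p}{2+p}\in[1,2)$ (admissible when $p\ge2$) so that $\tfrac1p=\tfrac1r-\tfrac12$, Young's inequality delivers $\|L(\theta)\|_{L^p(0,\tau;L^3)}\lesssim \tau^{1/p}\|\theta\|_{L^2(0,\tau;L^{3/2})}$.

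The remaining range $1\le p<2$ follows immediately from the case $p=2$ together with Hölder's inequality in time on the bounded interval: writing $\|L(\theta)\|_{L^p(0,\tau;L^3)}\le \tau^{1/p-1/2}\|L(\theta)\|_{L^2(0,\tau;L^3)}$ and inserting the $p=2$ bound reproduces the claimed $\tau^{1/p}$ homogeneity. Altogether, the only subtle point is the endpoint failure in the third assertion, which forces the switch from Hardy--Littlewood--Sobolev to Young on a finite interval and is exactly what accounts for the $\tau$-dependence there, in contrast with the scale-invariant first two assertions.
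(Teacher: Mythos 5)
Your argument is correct, and for the first two assertions it takes a genuinely different route from the paper. The paper deliberately runs everything through the maximal regularity theorem: it factors $L(\theta)=\bigl(\tfrac{{\rm d}}{{\rm d}t}\bigr)^{-(1-\alpha)}\Bigl(\bigl(\tfrac{{\rm d}}{{\rm d}t}\bigr)^{1-\alpha}(-\Delta)^{\alpha}R\varphi\Bigr)$ with $\varphi=(-\Delta)^{-\alpha}\mathbb{P}(\theta e_3)$ (taking $\alpha=\tfrac34$ for the $L^2L^{3/2}$ source and $\alpha=\tfrac12$ for the $L^{4/3}L^2$ source), uses a spatial Sobolev embedding to place $\varphi$ in $L^pL^6$, invokes the mixed fractional space--time derivative bound of Theorem~3.1, and concludes with a temporal Sobolev embedding $\bigl(\tfrac{{\rm d}}{{\rm d}t}\bigr)^{-(1-\alpha)}(L^p)\hookrightarrow L^4$. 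You instead bypass $R$ entirely: the pointwise-in-time $L^a$--$L^b$ smoothing of $e^{\sigma\Delta}\mathbb{P}$ reduces each estimate to a one-dimensional Riesz potential of order $\tfrac14$ or $\tfrac12$ acting on $\|\theta(\cdot)\|_{L^a}$, and Hardy--Littlewood--Sobolev on the line gives the $\tau$-independent constants. Your exponent bookkeeping is right throughout ($\tfrac32(\tfrac23-\tfrac16)=\tfrac34$, $\tfrac32(\tfrac12-\tfrac16)=\tfrac12$, $\tfrac14=\tfrac12-\tfrac14=\tfrac34-\tfrac12$), and since $a\in\{3/2,2\}$ the use of $\mathbb{P}$ is legitimate. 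Your approach is the more elementary, classical Kato-type one; the paper's choice showcases the maximal-regularity machinery that is its organizing theme and that it needs elsewhere (notably in Propositions~3.2 and~3.4, where the bilinear terms cannot be handled by pointwise smoothing alone). For the third assertion the two proofs essentially coincide: the paper also ends up with a convolution against $t^{-1/2}$ in time and handles the HLS endpoint by H\"older on the finite interval, exactly as you do with Young's inequality, and both yield the $\tau^{1/p}$ dependence by the same exponent count $\tfrac1p=\tfrac1r-\tfrac12$.
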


\begin{proof}\mbox{}
For $\theta \in L^2(0,\tau;L^{\frac{3}{2}}({\mathds{R}}^3))$, we write
\[
L(\theta)=
\bigr(\tfrac{{\rm d}}{{\rm d}t}\bigr)^{-\frac{1}{4}} \Bigl(\bigr(\tfrac{{\rm d}}{{\rm d}t}\bigr)^{\frac{1}{4}}(-\Delta)^{\frac{3}{4}} R \varphi \Bigr),
\]
where $\varphi(s)=(-\Delta)^{-\frac{3}{4}}{\mathbb{P}}\bigl(\theta(s)e_3\bigr)$.
Observe that $\varphi\in L^2(0,\tau;L^6({\mathds{R}}^3)^3)$, because of the Sobolev embedding 
$(-\Delta)^{-\frac{3}{4}}(L^{\frac{3}{2}})\hookrightarrow L^6$ (in dimension~3), with norm bounded by the norm of $\theta$
in $L^2(0,\tau;L^{\frac{3}{2}}({\mathds{R}}^3))$. By Theorem~\ref{thm:regmax},
we deduce that 
$L(\theta)\in \bigr(\frac{{\rm d}}{{\rm d}t}\bigr)^{-\frac{1}{4}} \Bigl(L^2(0,\tau;L^6({\mathds{R}}^3)^3)\Bigr)
\hookrightarrow L^4(0,\tau;L^6({\mathds{R}}^3)^3)$, 
the last inclusion arising from the Sobolev embedding
$\bigr(\frac{{\rm d}}{{\rm d}t}\bigr)^{-\frac{1}{4}}(L^2)\hookrightarrow L^4$ (in
dimension~1).
This establishes the first assertion of the proposition.

When $\theta\in L^{\frac{4}{3}}(0,\tau;L^2({\mathds{R}}^3))$, we write
\begin{equation}
\label{eq:L-theta}
L(\theta)=
\bigr(\tfrac{{\rm d}}{{\rm d}t}\bigr)^{-\frac{1}{2}} \Bigl(\bigr(\tfrac{{\rm d}}{{\rm d}t}\bigr)^{\frac{1}{2}}(-\Delta)^{\frac{1}{2}} R \psi \Bigr),
\end{equation}
with $\psi(s)=(-\Delta)^{-\frac{1}{2}}{\mathbb{P}}\bigl(\theta(s)e_3\bigr)$.
Notice that $\psi\in L^{\frac{4}{3}}(0,\tau;L^6({\mathds{R}}^3)^3)$, because of
the Sobolev embedding $(-\Delta)^{-\frac{1}{2}}L^2\hookrightarrow L^6$ (in dimension~ 3), with norm 
bounded by the norm of $\theta$ in 
$L^{\frac{4}{3}}(0,\tau;L^2({\mathds{R}}^3))$. 
Applying Theorem~\ref{thm:regmax}
with $\alpha=\frac{1}{2}$, we get 
$L(\theta)\in \bigr(\frac{{\rm d}}{{\rm d}t}\bigr)^{-\frac{1}{2}} \Bigl(L^{\frac{4}{3}}(0,\tau;L^6({\mathds{R}}^3)^3)\Bigr)
\hookrightarrow L^4(0,\tau;L^6({\mathds{R}}^3)^3)$.
The last inclusion comes from the Sobolev embedding
$\bigr(\frac{{\rm d}}{{\rm d}t}\bigr)^{-\frac{1}{2}}(L^{\frac{4}{3}})\hookrightarrow L^4$ (in dimension 1).
The second assertion of the proposition follows.

Next, for $\theta\in L^2(0,\tau;L^{\frac{3}{2}}({\mathds{R}}^3))$, let us write $L(\theta)$ as before in~\eqref{eq:L-theta}.
By Sobolev embedding $(-\Delta)^{-\frac{1}{2}}(L^{\frac{3}{2}})\hookrightarrow L^3$ in dimension~3, we have 
$\psi\in L^2(0,\tau;L^3)$ with norm bounded by $\|\theta\|_{L^2(0,\tau;L^{\frac{3}{2}})}$. 
By Theorem~\ref{thm:regmax} with $\alpha=\frac{1}{2}$, we deduce that 
$L(\theta)\in \bigr(\tfrac{{\rm d}}{{\rm d}t}\bigr)^{-\frac{1}{2}}\bigl(L^2(0,\tau;L^3)\bigr)\hookrightarrow L^p(0,\tau;L^3)$
for all $1\le p<\infty$. The last inclusion follows, for $2<p<\infty$, from 
the H\"older injection $L^2((0,\tau))\hookrightarrow  L^q(0,\tau)$ for $q\in [1,2]$
(with norm $\tau^{1/q-1/2}$) and Hardy-Littlewood-Sobolev inequality
$\bigr(\frac{{\rm d}}{{\rm d}t}\bigr)^{-\frac{1}{2}}(L^q)\hookrightarrow L^p$ (in dimension 1), for all $p\in(2,\infty)$ and 
$\frac1p=\frac1q-\frac12$.
For $1\le p\le 2$ it is sufficient to apply once more H\"older inequality.
\end{proof}

\section{The proof of the uniqueness}

We need a few lemmas before proving Theorem~\ref{th:uni}.

\begin{lemma}
\label{lem:S'}
Let $(u_0,\theta_0)\in \mathscr{S}'(\R^3)^3\times \mathscr{S}'(\R^3)$ with ${\rm div\,}u_0=0$, and let 
$(u,\theta)\in L^\infty(0,T;L^3(\R^3)^3)\times L^2(0,T;L^{\frac32}(\R^3))$ be a mild solution of~\eqref{def:sol} with
initial data $(u_0,\theta_0)$.
Then 
\[
(u,\theta)\in \mathscr{C}([0,T],\mathscr{S'}(\R^3)^3\times\mathscr{S'}(\R^3)).
\] 
Moreover, we have $u_0\in L^3(\R^3)^3$ and for every $t\in[0,T]$, $u(t)$ does also belong to $L^3(\R^3)^3$.
\end{lemma}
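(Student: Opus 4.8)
The plan is to read off the five building blocks $a$, $b$, $B(u,u)$, $C(u,\theta)$, $L(\theta)$ of the mild formulation~\eqref{def:sol} and to check that each defines a curve in $\mathscr{S}'(\R^3)$ that is continuous on $[0,T]$. The two linear contributions are immediate: for $\phi\in\mathscr{S}(\R^3)$ one has $\langle e^{t\Delta}u_0,\phi\rangle=\langle u_0,e^{t\Delta}\phi\rangle$, and $t\mapsto e^{t\Delta}\phi$ is continuous on $[0,T]$ with values in $\mathscr{S}(\R^3)$ (with $e^{0\Delta}\phi=\phi$), so $a,b\in\mathscr{C}([0,T],\mathscr{S}')$. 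It then remains to treat the three Duhamel terms, for which I would first record the integrability of their sources coming from $u\in L^\infty(0,T;L^3)$ and $\theta\in L^2(0,T;L^{3/2})$: namely $u\otimes u\in L^\infty(0,T;L^{3/2})$, $\theta\,u\in L^2(0,T;L^1)$ by H\"older, and $\theta e_3\in L^2(0,T;L^{3/2})$.

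The crux is the $\mathscr{S}'$-continuity of the three Duhamel terms, and the idea I would use is never to estimate them in a strong norm, but to test against a fixed $\phi\in\mathscr{S}$ and push every operator onto the smooth test-function side. Using the self-adjointness of $e^{\sigma\Delta}$ and the fact that $\P$, $\nabla$ and $e^{\sigma\Delta}$ are commuting Fourier multipliers, one rewrites
\[
\langle B(u,u)(t),\phi\rangle=\int_0^t\langle (u\otimes u)(s),\,\P e^{(t-s)\Delta}\nabla\phi\rangle\,\mathrm{d}s,
\]
and analogously $\langle C(u,\theta)(t),\phi\rangle=\int_0^t\langle(\theta u)(s),e^{(t-s)\Delta}\nabla\phi\rangle\,\mathrm{d}s$ and $\langle L(\theta)(t),\phi\rangle=\int_0^t\langle\theta(s)e_3,\P e^{(t-s)\Delta}\phi\rangle\,\mathrm{d}s$. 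The point of transferring the derivative onto $\phi$ is that no singular-in-$(t-s)$ kernel survives: $\|\P e^{\sigma\Delta}\nabla\phi\|_{L^3}\le C\|\nabla\phi\|_{L^3}$, $\|e^{\sigma\Delta}\nabla\phi\|_{L^\infty}\le\|\nabla\phi\|_{L^\infty}$ and $\|\P e^{\sigma\Delta}\phi\|_{L^3}\le C\|\phi\|_{L^3}$ are bounded uniformly in $\sigma\ge0$, since the heat semigroup contracts $L^3$ and $L^\infty$ and $\P$ is bounded on $L^3$. H\"older then majorizes the three integrands by $C\|u(s)\|_{L^3}^2\|\nabla\phi\|_{L^3}$, $\|\theta(s)\|_{L^{3/2}}\|u(s)\|_{L^3}\|\nabla\phi\|_{L^\infty}$ and $C\|\theta(s)\|_{L^{3/2}}\|\phi\|_{L^3}$, all integrable in $s$ on $[0,T]$ by the previous paragraph. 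Continuity of $t\mapsto\langle\cdot(t),\phi\rangle$ then follows from dominated convergence (the integrand converges pointwise in $s$ as $t\to t_0$, and the changing upper limit contributes a set of vanishing measure carrying an integrable weight). This yields $(u,\theta)\in\mathscr{C}([0,T],\mathscr{S}'(\R^3)^3\times\mathscr{S}'(\R^3))$, and since the three Duhamel terms vanish at $t=0$ we read off $u(0)=u_0$ and $\theta(0)=\theta_0$ in $\mathscr{S}'$.

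For the statement that $u_0$ and every $u(t)$ lie in $L^3$, the genuine difficulty is that one must resist proving strong $L^3$-continuity of the Duhamel terms: the crude bound $\|e^{\sigma\Delta}\P\nabla\cdot(u\otimes u)\|_{L^3}\lesssim\sigma^{-1}\|u\otimes u\|_{L^{3/2}}$ is \emph{not} integrable near $\sigma=0$, reflecting exactly the borderline scaling that forces the maximal-regularity machinery elsewhere in the paper. Instead I would argue by weak compactness. Fix $t_0\in[0,T]$ and choose $t_n\to t_0$ with $u(t_n)\in L^3$ and $\|u(t_n)\|_{L^3}\le M:=\|u\|_{L^\infty(0,T;L^3)}$, possible off a null set since $u\in L^\infty(0,T;L^3)$. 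As $L^3(\R^3)$ is reflexive, a subsequence satisfies $u(t_n)\rightharpoonup w$ weakly in $L^3$ with $\|w\|_{L^3}\le M$; testing against $\phi\in\mathscr{S}\subset L^{3/2}=(L^3)^{\ast}$ and comparing with the $\mathscr{S}'$-continuity already established forces $w=u(t_0)$, whence $u(t_0)\in L^3$ with $\|u(t_0)\|_{L^3}\le M$. Taking $t_0=0$ gives $u_0\in L^3$. Thus the main obstacle is conceptual rather than computational: the combination of $\mathscr{S}'$-continuity with the uniform almost-everywhere $L^3$ bound already pins down the pointwise $L^3$-membership, bypassing any strong estimate on the nonlinear terms.
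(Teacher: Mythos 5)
Your proposal is correct, and for the heart of the lemma --- the continuity of the three Duhamel terms --- it takes a genuinely different route from the paper. The paper works with the kernel $F(t,x)=t^{-3/2}F(1,x/\sqrt t)$ of $e^{t\Delta}\P\nabla\cdot$ and its analogue for $e^{t\Delta}{\rm div}$, and uses the $L^1$--$L^{3/2}$ (resp.\ $L^1$--$L^1$) convolution inequality together with $\|F(t,\cdot)\|_{L^1}\sim t^{-1/2}$ to prove \emph{strong} continuity of $B(u,u)$ into $L^{3/2}(\R^3)^3$ and of $C(u,\theta)$ into $L^1(\R^3)^3$ on $(0,T]$, handling the endpoint $t=0$ by separate arguments (the $\sqrt t$ decay of $\int_0^t(t-s)^{-1/2}\,{\rm d}s$, writing $C(u,\theta)={\rm div}\int_0^te^{(t-s)\Delta}(u\theta)\,{\rm d}s$, and a Fourier/Hausdorff--Young duality estimate for $L(\theta)$). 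You instead dualize \emph{all three} terms against a fixed $\phi\in\mathscr{S}$, throw $\P$, $\nabla$ and $e^{(t-s)\Delta}$ onto the test function where they are uniformly bounded in the operator parameter, and conclude by dominated convergence; this treats $[0,T]$ in one stroke, avoids any kernel scaling computation, and is arguably more elementary --- at the price of yielding only $\mathscr{S}'$-continuity, whereas the paper's argument gives the stronger $L^{3/2}$ and $L^1$ continuity away from $t=0$ as a by-product (information the lemma does not claim and the paper does not subsequently use). Two points you pass over quickly but which are easily justified: the interchange of the time integral with the duality pairing (the integrands are Bochner integrable in $L^{3/2}$ or $L^1$, or one can invoke Fubini), and the pointwise-in-$s$ convergence, which requires the strong continuity of $\sigma\mapsto e^{\sigma\Delta}$ on $L^3$ and on $\mathscr{C}_0$ up to $\sigma=0$, valid since $\P\phi,\nabla\phi$ lie in those spaces. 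Your final step --- extracting a weakly convergent subsequence from the a.e.\ bounded family $u(t_n)$ and identifying the limit with $u(t_0)$ via the $\mathscr{S}'$-continuity --- is exactly the paper's concluding duality argument, merely spelled out via reflexivity of $L^3$.
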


\begin{proof}
Let us denote by $F(t,x)$ the kernel of the operator $e^{t\Delta}\P\nabla\cdot$. 
It is well known, and easy to check, that $F$ satisfies the scaling
relations $F(t,x)=t^{-\frac32}F(1,x/\sqrt t)$, with $F(1,\cdot)\in (L^1(\R^3)\cap \mathscr{C}_0(\R^3))^{3\times 3}$.
From these properties and the dominated convergence theorem one deduces that,
for all $1\le p\le \infty$, that $F\in \mathscr{C}(0,\infty;L^p(\R^3))$.
Moreover, $\|F(t,\cdot)\|_1=t^{-\frac12}\|F(1,\cdot)\|_1$.

Now, if $(u,\theta)\in X_{T,r}$, then 
$u\otimes u\in L^\infty(0,T;L^{3/2}(\R^3)^{3\times 3})$.
Then, recalling the definition of the bilinear operator~$B$ and applying the above properties of $F$ with $p=1$, 
next applying the $L^1$-$L^{3/2}$ convolution inequality, shows that the map $t\mapsto B(u,u)(t)$ is continuous 
from $(0,T]$ to $L^{3/2}(\R^3)^3$.
Moreover, $\|B(u,u)(t)\|_{L^{3/2}}\to0$ as $t\to0$.
Hence, the map $t\mapsto B(u,u)(t)$ is continuous from $[0,T]$ to $L^{3/2}(\R^3)^3$
with value~$0$ at $t=0$.

Let us now consider $L(\theta)$.
Using the fact that the heat kernel $e^{-|x|^2/(4t)}/(4\pi\,t)^{3/2}$ is in
$\mathscr{C}_b(0,\infty ;L^1(\R^3))$, we readily see that 
$L(\theta)\in \mathscr{C}((0,T];L^{3/2}(\R^3))$.
To study the behavior of $L(\theta)$ near $t=0$ we consider 
$\varphi\in\mathscr{S}(\R^3)$ and  observe,
computing the Fourier transform of $\P\theta e_3$ with respect to the space variable,
 that
$t\mapsto \widehat h(t,\cdot)=\widehat{\P\theta e_3}(t,\cdot)$ belongs to $L^2(0,T;L^3(\R^3))$ by 
the Hausdorff-Young theorem. Then we have
\[
\begin{split}
|\langle L(\theta)(t),\varphi\rangle|
 &\le \int_0^t |\langle \widehat h(s),e^{-(t-s)|\cdot|^2}\widehat\varphi\rangle|\,{\rm d}s
\le \int_0^t \|\widehat h(s)\|_{L^3}\|\widehat\varphi\|_{L^{3/2}}\,{\rm d}s\\
&\le \|\widehat \varphi\|_{L^{3/2}}  \int_0^t\|\theta(s)\|_{L^{3/2}}\,{\rm d}s
\le C_\varphi\|\theta\|_{L^2(0,T;L^{3/2}(\R^3))} \,\sqrt t.
\end{split}
\]
Therefore, $L(\theta)(t)\to0$ as $t\to0$ in $\mathscr{S}'(\R^3)$
and we deduce that $L(\theta)\in \mathscr{C}([0,T],\mathscr{S}'(\R^3)^3)$,
with value $0$ at $t=0$.

Let us now consider $C(u,\theta)$.
We have $u\theta\in L^2(0,T;L^1(\R^3)^3)$. Moreover, the kernel of the operator
$e^{t\Delta}\nabla\cdot$ has the same scaling properties as $F$.
Therefore, proceeding as for $B(u,u)$ we see on the one hand that 
$C(u,\theta)\in \mathscr{C}((0,T];L^1(\R^3)^3)$.
On the other hand, we can also write 
\[
C(u,\theta)(t)={\rm div}\int_0^t e^{(t-s)\Delta}(u\theta){\,\rm d}s.
\]
But the $L^1(\R^3)$-norm of $\int_0^t e^{(t-s)\Delta}(u\theta){\,\rm d}s$
is bounded by $\sqrt t\|u\theta\|_{L^2(0,T;L^1(\R^3))}$ that goes to zero as $t\to0$.
Hence, $C(u,\theta)(t)\xrightarrow[t\to 0]{}0$ in $\mathscr{S}'(\R^3)^3$, by the continuity of the divergence 
operator from $L^1$ to $\mathscr{S}'$.

For the linear terms $a$ and $b$ it is obvious that they are 
both in $\mathscr{C}([0,T],\mathscr{S}'(\R^3))$, with values at $t=0$
given by $u_0$ and $\theta_0$, respectively.

Summarising, from the equation~\eqref{def:sol} we see that
$(u,\theta)\in \mathscr{C}([0,T],\mathscr{S}'(\R^3)^3\times\mathscr{S}'(\R^3))$, with values at $t=0$ given by $(u_0,\theta_0)$.
But $u\in L^\infty(0,T;L^3(\R^3)^3)$, hence, for all $0\le t\le T$, we can find
a sequence $t_n\xrightarrow[n\to\infty]{} t$, contained in $[0,T]$, such that $u(t_n)\in L^3(\R^3)^3$
for all $n\in{\mathds{N}}$,
with $L^3$-norm uniformly bounded by $ \|u\|_{L^\infty(0,T;L^3(\R^3)^3)}$, and
$u(t_n)\xrightarrow[n\to\infty]{} u(t)$ in $\mathscr{S}'(\R^3)^3$. By duality we deduce that $u(t)\in L^3(\R^3)$ 
for every $t\in[0,T]$.
In particular, the initial velocity $u_0$ must belong to $L^3(\R^3)$.
\end{proof}

\begin{lemma}
\label{lem:uL4L6}
There exists an absolute constant $r_0>0$ such that if $0\le r<r_0$ and 
$(u,\theta)\in X_{T,r}$ is a solution of~\eqref{def:sol}, with 
$(u_0,\theta_0)\in \mathscr{S}'(\R^3)^3\times \mathscr{S}'(\R^3)$ and
${\rm div\,}u_0=0$,
then there exists $\tau>0$ such that $u\in L^4(0,\tau;L^6(\R^3)^3)$.
\end{lemma}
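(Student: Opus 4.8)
The plan is to realise $u$, on a short time interval, as the unique fixed point of an affine contraction and to read off the desired regularity from the space in which that fixed point lives. By Lemma~\ref{lem:S'} we already know that $u_0\in L^3(\R^3)^3$, so the linear term $a(t)=e^{t\Delta}u_0$ belongs to $\C([0,\tau];L^3(\R^3)^3)$ and, by the classical fact that $u_0\mapsto e^{t\Delta}u_0$ maps $L^3(\R^3)$ boundedly into $L^4(0,\infty;L^6(\R^3))$ (equivalently, the embedding $L^3(\R^3)\hookrightarrow \dot B^{-1/2}_{6,4}(\R^3)$ together with the heat-semigroup characterisation of Besov norms), also to $L^4(0,\tau;L^6(\R^3)^3)$. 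Since $\theta\in L^2(0,T;L^{3/2}(\R^3))$, Proposition~\ref{prop:L} gives $L(\theta)\in L^4(0,\tau;L^6(\R^3)^3)\cap L^p(0,\tau;L^3(\R^3)^3)$ for every fixed $p\in(1,\infty)$. Fixing such a $p$, I thus have $a+L(\theta)\in Z_\tau:=L^4(0,\tau;L^6(\R^3)^3)\cap L^p(0,\tau;L^3(\R^3)^3)$.

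Next I linearise the quadratic term around $u$ itself. Setting $\mathcal{L}:=\tfrac12\bigl(B(\cdot,u)+B(u,\cdot)\bigr)$, one has $\mathcal{L}(u)=B(u,u)$, so the mild equation $u=a+B(u,u)+L(\theta)$ says precisely that $u$ is a fixed point of the affine map $\Psi(w):=a+L(\theta)+\mathcal{L}(w)$. I now invoke Proposition~\ref{prop:B} with $v=w=u$ and $\varepsilon=\tfrac12$: this fixes the absolute constant $r_0$ (the threshold the proposition produces for $\varepsilon=\tfrac12$), and for every $0\le r<r_0$ it yields a $\tau>0$ such that $\mathcal{L}$ is bounded with norm $\le\tfrac12$ both on $L^4(0,\tau;L^6(\R^3)^3)$ and on $L^p(0,\tau;L^3(\R^3)^3)$, hence on $Z_\tau$ with the sum norm. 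Consequently $\Psi$ is an affine contraction on $Z_\tau$ and also on $L^p(0,\tau;L^3(\R^3)^3)$, and in each of these spaces it possesses a unique fixed point; let $u^\ast\in Z_\tau$ be the one living in $Z_\tau$.

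It remains to identify $u$ with $u^\ast$, and here lies the only genuine difficulty: the given solution is a priori only in $L^\infty(0,\tau;L^3(\R^3)^3)$, and $B(u,u)$ cannot be controlled directly in $L^4(0,\tau;L^6)$ (the pointwise-in-time bound based on $u\otimes u\in L^\infty(0,\tau;L^{3/2})$ produces the non-integrable singularity $(t-s)^{-5/4}$, which is exactly why the maximal-regularity estimate of Proposition~\ref{prop:B} is indispensable). The resolution is that $u\in L^\infty(0,\tau;L^3)\subset L^p(0,\tau;L^3)$, so $u$ is a fixed point of $\Psi$ in $L^p(0,\tau;L^3(\R^3)^3)$; but $u^\ast\in Z_\tau\subset L^p(0,\tau;L^3(\R^3)^3)$ is a fixed point there as well. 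By uniqueness of the fixed point of the contraction $\Psi$ on $L^p(0,\tau;L^3(\R^3)^3)$ we conclude $u=u^\ast$, whence $u=u^\ast\in L^4(0,\tau;L^6(\R^3)^3)$, which is the assertion. The role of the intersection space $Z_\tau$ is precisely to bridge the two regularities: it is where the constructed fixed point automatically lives, while the common space $L^p(0,\tau;L^3)$ is where it can be matched with the given solution $u$.
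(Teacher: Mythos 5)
Your proof is correct and follows essentially the same route as the paper: construct the candidate $\tilde u$ (your $u^\ast$) in $L^4(0,\tau;L^6)\cap L^p(0,\tau;L^3)$ via the invertibility of an affine contraction built from Proposition~\ref{prop:B}, then identify it with $u$ by uniqueness of the fixed point in the common space $L^p(0,\tau;L^3)$. The only cosmetic difference is that you symmetrize the linearisation as $\tfrac12\bigl(B(\cdot,u)+B(u,\cdot)\bigr)$ where the paper simply uses $B(\cdot,u)$; both satisfy the needed smallness estimates and reproduce $B(u,u)$ at $w=u$.
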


\begin{proof}
Let us take $p=2$ throughout this proof (any other choice $1<p<\infty$ would do: a different choice of~$p$ would just 
affect the value of $r_0$ and $\tau$). 
We know, by Proposition~\ref{prop:B}, that there exists $r_0$ and $\tau>0$ such that
if $(u,\theta)\in X_{T,r}$ with $0\le r<r_0$, then the norm
of the linear operator
$B(\cdot,u)$ from $L^4(0,\tau;L^6(\R^3)^3)\cap L^p(0,\tau;L^3(\R^3)^3)$ to itself
is bounded, with norm smaller than $\frac{1}{2}$. 
This shows that ${\rm Id}-B(\cdot,u)$ is invertible
in $L^4(0,\tau;L^6(\R^3)^3)\cap L^p(0,\tau;L^3(\R^3)^3)$.

Moreover, as $\theta\in L^2(0,\tau;L^{\frac{3}{2}}(\R^3))$ by our assumption,
we get from Proposition~\ref{prop:L} that
$L(\theta)\in L^4(0,\tau;L^6(\R^3)^3)\cap L^p(0,\tau;L^3(\R^3)^3)$. 

As observed in the previous Lemma, we have $u_0\in L^3(\R^3)$.
Moreover, $L^3(\R^3)\subset \dot B^0_{3,3}(\R^3)\subset 
\dot B^{-1/2}_{6,3}(\R^3)\subset \dot B^{-1/2}_{6,4}(\R^3) $.
See~\cite[Chapt.~2]{BaCD} for generalities on Besov spaces. The characterisation
of Besov spaces through the heat kernel (see \cite[Theorem~2.34]{BaCD}) then implies
that  $t\mapsto e^{t\Delta}u_0\in L^4(0,\tau;L^6(\R^3)^3)$. 
Since we have also that $t\mapsto e^{t\Delta}u_0 \in {\mathscr{C}}([0,\tau];L^3(\R^3)^3)$, we obtain
\[
a\in L^4(0,\tau;L^6(\R^3)^3)\cap L^p(0,\tau;L^3(\R^3)^3),
\qquad \text{for all $1<p<\infty$}.
\]
These considerations allow us to define 
\[
\tilde{u}=\bigl({\rm Id}-B(\cdot,u)\bigr)^{-1}\bigl(a+L(\theta)\bigr).
\]
We would like to show that $u=\tilde{u}$.
By the assumption on $u$ and the construction of $\tilde{u}$,
these two functions satisfy 
\[
u=B(u,u)+a+L(\theta)\quad\mbox{and}\quad \tilde{u}=B(\tilde{u},u)+a+L(\theta).
\]
Moreover, $\tilde u\in  L^4(0,\tau;L^6(\R^3)^3)\cap L^p(0,\tau;L^3(\R^3)^3)$.
Their difference $v:=u-\tilde{u}$ satisfies 
\[
v\in L^p(0,\tau ; L^3(\R^3)^3)\quad \mbox{and}\quad v=B(v,u).
\]
Reducing (if necessary) the value of $\tau$, we deduce from 
the last point of Proposition~\ref{prop:B} that
\[
\|v\|_{L^p(0,\tau,L^3)}\le \tfrac{1}{2}\|v\|_{L^p(0,\tau;L^3(\R^3)^3)}.
\]
This implies that $v=0$ in $L^p(0,\tau;L^3(\R^3)^3)$.
In particular, $u=\tilde u\in  L^4(0,\tau;L^6(\R^3)^3)$.
\end{proof}

\begin{remark}
Under the conditions of Lemma~\ref{lem:S'}, the initial temperature $\theta_0$ must belong to the inhomogeneous 
Besov space $B^{-1}_{3/2,2}(\R^3)$.
Indeed, $\theta\in L^2(0,\tau;L^{\frac32}(\R^3))$,
and $C(u,\theta)$ then belongs to this same space by the third claim of
Proposition~\ref{prop:C} and the previous lemma,
for $\tau>0$ small enough. 
Then, by the second equation of~\eqref{def:sol}, we obtain
$b\in L^2(0,\tau;L^{\frac32}(\R^3))$. The characterisation of inhomogeneous Besov spaces
with negative  regularity (see \cite[Theorem~5.3]{Lem02}) then immediately gives $\theta_0\in B^{-1}_{3/2,2}(\R^3)$.
\end{remark}

\begin{lemma}
\label{lem:theta-reg}
Let $0\le r<r_0$ and $(u_1,\theta_1)$ and $(u_2,\theta_2)$ be two mild solutions of~\eqref{B}
in $X_{T,r}$ arising from $(u_0,\theta_0)\in \mathscr{S}'(\R^3)^3\times\mathscr{S}'(\R^3)$,
with ${\rm div}\,u_0=0$. Let also
$\theta=\theta_1-\theta_2$.
Then there exists $\tau>0$ such that $\theta\in L^{\frac{4}{3}}(0,\tau;L^2(\R^3))$.
\end{lemma}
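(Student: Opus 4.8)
The plan is to follow the pattern of the proof of Lemma~\ref{lem:uL4L6}. Since \emph{a priori} we only know that $\theta=\theta_1-\theta_2\in L^2(0,\tau;L^{\frac32}(\R^3))$, we cannot run a fixed point directly in $L^{\frac43}(0,\tau;L^2(\R^3))$; instead I would construct an auxiliary function $\tilde\theta$ living in the better space and then identify it with $\theta$ by a uniqueness argument carried out in the common weaker space. Assuming $r_0$ small enough for the applications of Proposition~\ref{prop:C} below, I first use Lemma~\ref{lem:uL4L6} applied to each of the two solutions to find $\tau>0$ with $u_1,u_2\in L^4(0,\tau;L^6(\R^3)^3)$, so that $u=u_1-u_2\in L^4(0,\tau;L^6(\R^3)^3)$ as well. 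By the bilinearity of $C$, subtracting the two temperature equations yields the relation already recorded in~\eqref{eq:unicite},
\[
\theta=C(u_1,\theta)+C(u,\theta_2),
\]
while $\theta\in L^2(0,\tau;L^{\frac32}(\R^3))$ since it is the difference of two elements of $X_{T,r}$.

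Next I would introduce the intersection space $Y=L^2(0,\tau;L^{\frac32}(\R^3))\cap L^{\frac43}(0,\tau;L^2(\R^3))$. Combining the first two assertions of Proposition~\ref{prop:C}, applied with a small $\varepsilon$ (which fixes the threshold on $r_0$) and with a correspondingly small $\tau$, the operator $C(u_1,\cdot)$ maps $Y$ into itself with norm $<1$; hence $\mathrm{Id}-C(u_1,\cdot)$ is invertible on $Y$ through its Neumann series. On the other hand, the third and fourth assertions of the same proposition, together with $u\in L^4(0,\tau;L^6(\R^3)^3)$ and $\theta_2\in L^2(0,\tau;L^{\frac32}(\R^3))$, show that $C(u,\theta_2)$ lies in $L^2(0,\tau;L^{\frac32}(\R^3))$ and in $L^{\frac43}(0,\tau;L^2(\R^3))$, that is, $C(u,\theta_2)\in Y$. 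This lets me set
\[
\tilde\theta=\bigl(\mathrm{Id}-C(u_1,\cdot)\bigr)^{-1}C(u,\theta_2)\in Y,
\]
so that $\tilde\theta=C(u_1,\tilde\theta)+C(u,\theta_2)$ by construction.

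Finally I would prove $\theta=\tilde\theta$. Subtracting the two relations and using the linearity of $C$ in its second argument, the difference $w=\theta-\tilde\theta$ solves $w=C(u_1,w)$. Both $\theta$ and $\tilde\theta\in Y$ belong to $L^2(0,\tau;L^{\frac32}(\R^3))$, hence so does $w$; and by the third assertion of Proposition~\ref{prop:C}, with $u_1\in L^4(0,\tau;L^6(\R^3)^3)$, the operator $C(u_1,\cdot)$ is a contraction on $L^2(0,\tau;L^{\frac32}(\R^3))$ after possibly shrinking $\tau$. Therefore $w=0$, so that $\theta=\tilde\theta\in Y\subset L^{\frac43}(0,\tau;L^2(\R^3))$, which is the asserted conclusion.

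The main obstacle is exactly the mismatch of spaces responsible for this circularity: the contraction that would directly produce the gain of integrability, namely $C(u_1,\cdot)$ acting on $L^{\frac43}(0,\tau;L^2)$ via the first assertion, presupposes that $\theta$ already belong to that space. The device of building $\tilde\theta$ inside $Y$, where $\mathrm{Id}-C(u_1,\cdot)$ is invertible and where $C(u,\theta_2)$ lands, and then matching it with $\theta$ in the weaker space $L^2(0,\tau;L^{\frac32})$ — where both functions are already known to live and where the third assertion still furnishes a contraction — is what breaks the loop.
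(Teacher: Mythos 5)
Your proposal is correct and follows essentially the same route as the paper's proof: construct $\tilde\theta=\bigl(\mathrm{Id}-C(u_1,\cdot)\bigr)^{-1}C(u,\theta_2)$ in the intersection space $L^2(0,\tau;L^{\frac32})\cap L^{\frac43}(0,\tau;L^2)$ using assertions 1--2 of Proposition~\ref{prop:C} for the invertibility and assertions 3--4 for the source term, then identify $\theta=\tilde\theta$ via the contraction $C(u_1,\cdot)$ on $L^2(0,\tau;L^{\frac32})$ furnished by assertion~3. The paper's argument is the same in every essential detail, including the final identification step in the weaker space.
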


\begin{proof}
Let $u=u_1-u_2$. Then
$(u,\theta)\in X_{T,r}$ satisfies \eqref{eq:unicite}.
By~Lemma~\ref{lem:uL4L6} we know that there exists $\tau_0>0$ such that
$u_1,u_2\in L^4(0,\tau_0;L^6(\R^3)^3)$.
Applying the last two assertions of Proposition~\ref{prop:C} we get
$C(u,\theta_2)\in L^{\frac{4}{3}}(0,\tau_0;L^2(\R^3))
\cap L^2(0,\tau_0 ;L^{\frac{3}{2}}(\R^3))$.
The first and the second assertions of Proposition~\ref{prop:C} ensure the existence
of $\tau >0$ (we can assume $\tau\le\tau_0$) such that
$C(u_1,\cdot)$ is bounded from $L^{\frac{4}{3}}(0,\tau;L^2(\R^3))\cap L^2(0,\tau;L^{\frac{3}{2}}(\R^3))$ to itself, 
with norm less than $\frac{1}{2}$.
Therefore we can define 
\[
\tilde{\theta}=\bigl({\rm Id}-C(u_1,\cdot)\bigr)^{-1}(C(u,\theta_2)).
\]
We see that $\tilde{\theta}\in L^{\frac{4}{3}}(0,\tau;L^2(\R^3))\cap L^2(0,\tau ;L^{\frac{3}{2}}(\R^3))$, and moreover
$\tilde{\theta}=C(u_1,\tilde{\theta})+C(u,\theta_2)$. 
Let $\psi=\theta-\tilde{\theta}$. 
Then, subtracting the second equation in~\eqref{eq:unicite}, we obtain
\[
\psi\in L^2(0,\tau;L^{\frac{3}{2}}(\R^3))\quad\mbox{ and } \quad \psi=C(u_1,\psi).
\]
But $u_1\in L^4(0,\tau;L^6(\R^3)^3)$ by Lemma~\ref{lem:uL4L6}. Hence, 
applying the third assertion of Proposition~\ref{prop:C} we get $\psi=0$ and so 
$\theta=\tilde{\theta}$. The latter equality implies that 
$\theta\in L^{\frac{4}{3}}(0,\tau;L^2(\R^3))$.
\end{proof}

\begin{proof}[Proof of Theorem~\ref{th:uni}.]
Let $r_0>0$ be the absolute constant determined in Lemma~\ref{lem:uL4L6}.
Assume that $(u_1,\theta_1)$ and
$(u_2,\theta_2)$ are two mild solutions in $X_{T,r}$ of~\eqref{B}, with $0\le r<r_0$, starting from  
$(u_0,\theta_0)\in \mathscr{S}'(\R^3)^3\times \mathscr{S}'(\R^3)$. 
In fact, by Lemma~\ref{lem:S'}, there is no restriction in assuming that $u_0\in L^3(\R^3)^3$.

Then, setting $u=u_1-u_2$  and $\theta=\theta_1-\theta_2$, the couple
$(u,\theta)$ satisfies~\eqref{eq:unicite}.
As $\theta_1,\theta_2\in L^2(0,T;L^{\frac{3}{2}}({\mathds{R}}^3))$ by our assumption and the first item of~Proposition~\ref{prop:L}, we know that
$L(\theta)\in L^4(0,T; L^6({\mathds{R}}^3)^3)$.
Applying~Proposition~\ref{prop:B}, we know that there exists $\tau>0$ such that
$\|B(u,u_1)+B(u_2,u)\|_{L^4(0,\tau; L^6({\mathds{R}}^3)^3)}\le \frac{1}{2} \|u\|_{L^4(0,\tau; L^6({\mathds{R}}^3)^3)}$.
This allows us to show, applying Lemma~\ref{lem:uL4L6}, next using the first equation in~\eqref{eq:unicite}, 
that 
\[
\|u\|_{L^4(0,\tau; L^6({\mathds{R}}^3)^3)}\le 2\|L(\theta)\|_{L^4(0,\tau; L^6({\mathds{R}}^3)^3)}.
\]
Applying the first assertion of Proposition~\ref{prop:C}, with $v=u_1$
and the last assertion of  Proposition~\ref{prop:C} with $\vartheta=\theta_2$,
we deduce from the second equality in~\eqref{eq:unicite} that, for all
$\varepsilon>0$, there exists $0<\tau'\le\tau$ such that 
\[
\|\theta\|_{L^{\frac{4}{3}}(0,\tau';L^2({\mathds{R}}^3))}\le 
\varepsilon \bigl(\|\theta\|_{L^{\frac{4}{3}}(0,\tau';L^2({\mathds{R}}^3))} + \|u\|_{L^4(0,\tau'; L^6({\mathds{R}}^3)^3)}\bigr).
\]
But the ${L^{\frac{4}{3}}(0,\tau';L^2({\mathds{R}}^3))}$-norm of $\theta$ is finite
by Lemma~\ref{lem:theta-reg}, so
\[
\|\theta\|_{L^{\frac{4}{3}}(0,\tau';L^2({\mathds{R}}^3))}\le \frac{\varepsilon}{1-\varepsilon}\,\|u\|_{L^4(0,\tau'; L^6({\mathds{R}}^3)^3)}.
\]
The second item of Proposition~\ref{prop:L} allows us to take 
$\varepsilon>0$ such that
\[
2\,\frac{\varepsilon}{1-\varepsilon}\,\|L\|_{{\mathscr{L}}(L^{4/3}(0,\tau';L^2({\mathds{R}}^3)),L^4(0,\tau'; L^6({\mathds{R}}^3)^3)}<1.
\]
We conclude that $u=0$ in $L^4(0,\tau'; L^6({\mathds{R}}^3)^3)$. 
This implies that $\theta=0$ \emph{a.e.} on $(0,\tau')$.
The uniqueness is thus established at least during a short time interval $[0,\tau')$,
for a suitable $0<\tau'\le T$. 

A standard argument now allows us to deduce that the uniqueness holds, in fact, in the whole interval $[0,T]$: 
let $\tau^*$ be the supremum of the times
$t_0\in[0,T]$ such that $(u_1,\theta_1)=(u_2,\theta_2)$ in $X_{t_0,r}$.
Let us show that $\tau^*=T$. Indeed, otherwise, by the
continuity of $(u_1,\theta_1)$ and $(u_2,\theta_2)$ from  $[0,T]$ to 
$\mathscr{C}([0,T],\mathscr{S}'(\R^3)^3\times \mathscr{S}'(\R^3))$,
established in Lemma~\ref{lem:S'},
we deduce that 
\begin{equation}
\label{eq:shift}
(u_1(\tau^*),\theta_1(\tau^*))=(u_2(\tau^*),\theta_2(\tau^*))
\in \mathscr{S}'(\R^3)^3\times \mathscr{S}'(\R^3).
\end{equation}
But $(u_1,\theta_1)(\cdot+\tau^*)$ and $(u_2,\theta_2)(\cdot+\tau^*)$
are mild solutions of~\eqref{B} in $X_{T-\tau^*,r}$, with initial data given 
by~\eqref{eq:shift}. Therefore, applying the uniqueness
result in short-time intervals established before, we see that there exists $\tau''$, 
such that $0<\tau''<T-\tau^*$, and 
$(u_1,\theta_1)(\cdot+\tau^*)=(u_2,\theta_2)(\cdot+\tau^*)$
in $X_{\tau'',r}$.
Then $(u_1,\theta_1)=(u_2,\theta_2)$ in $X_{\tau^*+\tau'',r}$ and this would contradict 
the definition of $\tau^*$. The uniqueness is thus granted in the whole
interval $[0,T]$.
\end{proof}

\section{Existence}
\label{sec:existence}

Let us prove Theorem~\ref{th:existence}, that ensures the existence of solution in the space where we obtained the uniqueness.
In fact, an existence theorem for solutions to the Boussinesq system was established in~\cite{BraH}, under assumptions more 
general than that of Theorem~\ref{th:existence}. However, the solution constructed in~\cite{BraH}
a priori does not satisfy the required condition on the temperature, $\theta\in  L^2(0,T;L^{\frac32}(\R^3))$. Therefore, what remains 
to do in order to establish Theorem~\ref{th:existence}, is to show that the solution constructed in~\cite{BraH}
does satisfy such condition, as soon as the initial temperature does belong to $B^{-1}_{3/2,2}(\R^3)$.

For this, let us introduce some useful function spaces:
For $1\le p\le \infty$ and $0<T\le\infty$, we define  $Z_{p,T}$ to be
the subspace of all vector fields
$u\in L^1_{\rm loc}(0,T;L^p(\R^3)^3)$ such that
\[
\|u\|_{Z_{p,T}} =\esssup_{t\in(0,T)}t^{\frac12(1-\frac3p)}\|u(t)\|_p<\infty.
\]
In the same way, let $Y_{q,T}$ be 
the subspace of all the functions 
$\theta\in L^1_{\rm loc}(0,T;L^q(\R^3))$ such that
\[
\|\theta\|_{Y_{q,T}} =\esssup_{t\in(0,T)}t^{\frac32(1-\frac1q)}\|\theta(t)\|_q<\infty.
\]
We will need the following bilinear estimate:

\begin{proposition}
\label{prop:lo}
For all $u\in Z_{6,T}$ and $\theta\in L^2(0,T;L^\frac32(\R^3))$,
\[
\|C(u,\theta)\|_{L^2(0,T;L^\frac32(\R^3))} 
\le \kappa \|u\|_{Z_{6,T}}\|\theta\|_{L^2(0,T;L^\frac32(\R^3))},
\]
where  $\kappa>0$ is some constant independent on $T$, $u$ and $\theta$.
\end{proposition}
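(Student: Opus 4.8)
The plan is to reduce the bilinear bound to a scalar convolution inequality on $(0,T)$ whose kernel is homogeneous of degree $-1$, so that scaling invariance forces the constant $\kappa$ to be independent of $T$.

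First I would estimate the integrand of $C(u,\theta)$ pointwise in time. By Hölder's inequality, since $\tfrac{1}{6/5}=\tfrac{1}{3/2}+\tfrac16$, one has $\theta(s)u(s)\in L^{6/5}(\R^3)^3$ with
\[
\|\theta(s)u(s)\|_{L^{6/5}}\le \|\theta(s)\|_{L^{3/2}}\|u(s)\|_{L^6}\le s^{-1/4}\|u\|_{Z_{6,T}}\|\theta(s)\|_{L^{3/2}},
\]
using the definition of the $Z_{6,T}$-norm (whose weight is $s^{1/4}$). Next I would invoke the heat-kernel bound $\|\nabla e^{\tau\Delta}\|_{\mathscr{L}(L^{6/5},L^{3/2})}\lesssim \tau^{-3/4}$, which follows from $\|e^{\tau\Delta}\|_{\mathscr{L}(L^{6/5},L^{3/2})}\lesssim\tau^{-\frac32(\frac56-\frac23)}=\tau^{-1/4}$ together with the extra factor $\tau^{-1/2}$ gained from the gradient. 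Writing ${\rm div}\,(\theta u)=\sum_j\partial_j(\theta u_j)$ and inserting these two bounds into the definition of $C$ gives, with $\phi(s):=\|\theta(s)\|_{L^{3/2}}$,
\[
\|C(u,\theta)(t)\|_{L^{3/2}}\lesssim \|u\|_{Z_{6,T}}\int_0^t (t-s)^{-3/4}\,s^{-1/4}\,\phi(s)\,{\rm d}s.
\]

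The crucial observation is that the kernel $(t-s)^{-3/4}s^{-1/4}$ is homogeneous of degree $-1$, reflecting the scaling invariance of the system. I would exploit this through the substitution $s=t\sigma$, which converts the inner integral into $\int_0^1(1-\sigma)^{-3/4}\sigma^{-1/4}\phi(t\sigma)\,{\rm d}\sigma$, the powers of $t$ cancelling exactly. Taking the $L^2(0,T)$-norm in $t$ and applying Minkowski's integral inequality moves the norm inside the $\sigma$-integral; the rescaling $t\mapsto t\sigma$ then yields $\|\phi(\cdot\,\sigma)\|_{L^2(0,T)}\le\sigma^{-1/2}\|\phi\|_{L^2(0,T)}$, where one uses $T\sigma\le T$ to keep the domain of integration under control. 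The remaining $\sigma$-integral is the finite Beta integral $\int_0^1(1-\sigma)^{-3/4}\sigma^{-3/4}\,{\rm d}\sigma=B(\tfrac14,\tfrac14)$, and since $\|\phi\|_{L^2(0,T)}=\|\theta\|_{L^2(0,T;L^{3/2})}$, the claimed bound follows with $\kappa$ a fixed multiple of $B(\tfrac14,\tfrac14)$.

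The only delicate point — and the main obstacle to a $T$-independent constant — is precisely the exact cancellation of the powers of $t$: the exponent $-3/4$ coming from the heat--divergence smoothing and the exponent $-1/4$ coming from the $Z_{6,T}$ weight must sum to $-1$. This scaling criticality is what simultaneously guarantees convergence of the Beta integral (both exponents $-3/4$ lie in $(-1,0)$) and the independence of $\kappa$ on $T$; had the homogeneity degree differed from $-1$, the substitution would have left a residual power of $T$. Equivalently, one could invoke Schur's test for homogeneous kernels with the weight $s^{-1/2}$, which produces the same Beta integral.
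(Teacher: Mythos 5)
Your proof is correct, and it reaches the same pointwise-in-time reduction as the paper -- H\"older in space giving $\|\theta(s)u(s)\|_{L^{6/5}}\le s^{-1/4}\|u\|_{Z_{6,T}}\|\theta(s)\|_{L^{3/2}}$, followed by the $\tau^{-3/4}$ smoothing of $e^{\tau\Delta}\mathrm{div}$ from $L^{6/5}$ to $L^{3/2}$ -- but then diverges in how it handles the resulting one-dimensional integral inequality. The paper absorbs the weight $s^{-1/4}$ into the function, setting $f(s)=s^{-1/4}\|\theta(s)\|_{L^{3/2}}\mathds{1}_{\R^+}(s)$, notes that $f\in L^{4/3,2}(\R)$ by H\"older in Lorentz spaces and that $|\cdot|^{-3/4}\in L^{4/3,\infty}(\R)$, and concludes via the Young--O'Neil convolution inequality that the convolution lands in $L^{2,2}=L^2$. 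You instead keep the full kernel $(t-s)^{-3/4}s^{-1/4}$, observe that it is homogeneous of degree $-1$, and run the substitution $s=t\sigma$ followed by Minkowski's integral inequality and the rescaling bound $\|\phi(\cdot\,\sigma)\|_{L^2(0,T)}\le\sigma^{-1/2}\|\phi\|_{L^2(0,T)}$ (valid since $T\sigma\le T$), arriving at the Beta integral $B(\tfrac14,\tfrac14)$; this is in essence a Schur test for homogeneous kernels. Your computation checks out: the powers of $t$ cancel exactly, and the combined exponent $\sigma^{-1/4-1/2}=\sigma^{-3/4}$ together with $(1-\sigma)^{-3/4}$ keeps the Beta integral finite. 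What your route buys is elementarity and transparency -- no Lorentz-space machinery is needed, and the degree-$(-1)$ homogeneity makes the $T$-independence of $\kappa$ manifest rather than a byproduct of scale-invariant function-space norms; what the paper's route buys is brevity for a reader who already has the Young--O'Neil inequality available (as it is used elsewhere via \cite{Lem02}). Either argument is a complete proof of the proposition.
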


\begin{proof}
Using that $\|u(s)\|_{L^6}\le s^{-1/4}\|u\|_{Z_{6,T}}$ and letting 
$f(s)=s^{-1/4}\|\theta(s)\|_{L^{3/2}}{\mathds{1}}_{\R^+}(s)$ we can estimate
\[
\|C(u,\theta)(t)\|_{L^{3/2}}
\le c \|u\|_{Z_{6,T}}
 \int_0^t (t-s)^{-3/4} f(s){\rm\,d}s.
\]
Here $c$ is the $L^{\frac65}(\R^3)$-norm of the kernel of $e^{\Delta}{\rm div\,}$.
But $f\in L^{\frac43,2}(\R)$ by H\"older inequality in Lorentz spaces,
with norm controlled by the norm of $\theta$ in $L^2(0,T;L^\frac32(\R^3))$, independently of~$T$. 
Moreover, $|\!\cdot\!|^{-3/4}\in L^{\frac43,\infty}(\R)$, hence
$t\mapsto \|C(u,\theta)(t)\|_{L^{3/2}}{\mathds{1}}_{\R^+}(t)$ belongs to $L^{2,2}(\R)=L^2(\R)$
by Young-O'Neil inequality (see~\cite[Theorem~2.3]{Lem02}). 
\end{proof}

Let us now recall the local existence theorem in~\cite[Theorem~2.4]{BraH}.

\begin{theorem}[See \cite{BraH}]
\label{th:brahe}
If $3<p<\infty$, $\frac32<q<3$ and $\frac23<\frac1{p}+\frac1{q}$, and if
$(u_0,\theta_0)$ belongs to the closure of the Schwartz class $\mathscr{S}(\R^3)^3\times\mathscr{S}(\R^3)$ 
in the space $B^{-(1-3/p)}_{p,\infty}(\R^3)^3\times B^{-3(1-1/q)}_{q,\infty}(\R^3)$,
with ${\rm div}\,u_0=0$, then there exists $T>0$ and a solution $(u,\theta)$ to~\eqref{B} such that
\[
(u,\theta)\in \bigl(Z_{p,T}\cap Z_{\infty,T}\bigr)\times
  \bigl(Y_{q,T}\cap Y_{\infty,T}\bigr).
  \]
Moreover,
\[
\|u\|_{Z_{p,T}\cap Z_{\infty,T}}+\|\theta\|_{Y_{q,T}\cap Y_{\infty,T}}\xrightarrow[T\to0]{}0.
\]
Furthermore, if
$u_0\in L^3(\R^3)^3\subset B^{-(1-3/p)}_{p,\infty}$,
then $u\in \C([0,T],L^3(\R^3)^3)$ and if $\theta_0\in L^1(\R^3)\subset B^{-3(1-1/q)}_{q,\infty}$ then $\theta_0\in \C([0,T],L^1(\R^3))$.
\end{theorem}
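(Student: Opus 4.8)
The plan is to construct the solution by a Picard fixed-point argument in the scale-invariant space $E_T := (Z_{p,T}\cap Z_{\infty,T})\times(Y_{q,T}\cap Y_{\infty,T})$, exactly in the spirit of Kato's theory \cite{Kat84}. I would recast the integral system~\eqref{def:sol} as the fixed-point problem $(u,\theta)=\Phi(u,\theta)$ with $\Phi(u,\theta)=\bigl(a+B(u,u)+L(\theta),\,b+C(u,\theta)\bigr)$, where $a=e^{t\Delta}u_0$ and $b=e^{t\Delta}\theta_0$. Since $B,C$ are bilinear and $L$ is linear, existence of a fixed point will follow from the standard contraction lemma once I establish (i) that the linear data $(a,b)$ lies in $E_T$ with norm tending to $0$ as $T\to0$, and (ii) that $B$, $C$, $L$ are bounded multilinear maps on the relevant factors of $E_T$, with norms independent of $T$.

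For step (i), I would use the heat-kernel characterisation of Besov spaces (as in \cite[Theorem~2.34]{BaCD}): membership $u_0\in B^{-(1-3/p)}_{p,\infty}$ is equivalent to $\sup_{t>0}t^{\frac12(1-3/p)}\|e^{t\Delta}u_0\|_p<\infty$, which is precisely $\|a\|_{Z_{p,\infty}}$, and likewise $\theta_0\in B^{-3(1-1/q)}_{q,\infty}$ corresponds to $\|b\|_{Y_{q,\infty}}<\infty$ (and similarly for the $Z_\infty$, $Y_\infty$ components). The hypothesis that $(u_0,\theta_0)$ lies in the \emph{closure} of the Schwartz class is what upgrades boundedness to decay: for Schwartz data the quantity $t^{\frac12(1-3/p)}\|e^{t\Delta}u_0\|_p$ tends to $0$ as $t\to0$, so by density together with the uniform bound one gets $\|a\|_{Z_{p,T}\cap Z_{\infty,T}}+\|b\|_{Y_{q,T}\cap Y_{\infty,T}}\to0$ as $T\to0$.

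For step (ii), the decisive estimates are pure scaling computations. Writing $\|u(s)\|_p\le s^{-\frac12(1-3/p)}\|u\|_{Z_{p,T}}$ and $\|\theta(s)\|_q\le s^{-\frac32(1-1/q)}\|\theta\|_{Y_{q,T}}$, then combining H\"older in $x$ with the smoothing bound $\|e^{t\Delta}\nabla\cdot f\|_q\lesssim t^{-1/2-\frac32(1/m-1/q)}\|f\|_m$, each operator reduces to a Beta-type time integral $\int_0^t(t-s)^{-a}s^{-b}\,{\rm d}s$ whose power of $t$ cancels the scaling prefactor exactly. The integral converges at the upper endpoint $s=t$ when $a<1$, which amounts to $p>3$ for both $B$ and $C$; it converges at the lower endpoint $s=0$ when $b<1$, and for $C$ this reads $\tfrac12(1-3/p)+\tfrac32(1-1/q)<1$, i.e.\ exactly $\tfrac1p+\tfrac1q>\tfrac23$. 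For the buoyancy term I would show $L\colon Y_{q,T}\to Z_{p,T}$ is bounded, the endpoint conditions there being $q<3$ at $s=0$ and the (automatic) $1/q-1/p<2/3$ at $s=t$; the $Z_\infty$ and $Y_\infty$ components are controlled by the usual mixed estimates involving both exponents. With these bounds and the $T\to0$ smallness of the data, the contraction lemma produces a fixed point in a small ball of $E_T$, carrying along the stated decay of its norm.

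Finally, for the persistence statements I would argue by propagation of integrability. When $u_0\in L^3$ the linear part $e^{t\Delta}u_0$ lies in $\C([0,T],L^3(\R^3)^3)$ by strong continuity of the heat semigroup on $L^3$; complementary estimates showing that $B(u,u)$ and $L(\theta)$ also land in $\C([0,T],L^3)$ for $(u,\theta)\in E_T$ then force $u\in\C([0,T],L^3(\R^3)^3)$, and the analogous argument in $L^1$ handles $\theta$ when $\theta_0\in L^1$. I expect the main obstacle to be the low-endpoint analysis of $C$: giving $C(u,\theta)$ a rigorous distributional meaning and controlling the singular integral near $s=0$ is precisely what pins down the borderline hypothesis $\tfrac1p+\tfrac1q>\tfrac23$, and the delicate point is that mere membership in the Besov space does not yield the decay of the linear data --- one genuinely needs the closure-of-Schwartz hypothesis in order to run the fixed point on a sufficiently short time interval.
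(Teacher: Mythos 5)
First, a point of comparison: the paper does not prove this statement at all --- it is recalled verbatim from \cite{BraH} (Theorem~2.4 there) and used as a black box in the existence section, so there is no in-paper argument to measure your proposal against. What you have written is a reconstruction of the proof in the cited reference, and your overall strategy (Kato-type fixed point in the scale-invariant spaces $Z_{p,T}\cap Z_{\infty,T}$ and $Y_{q,T}\cap Y_{\infty,T}$, with the closure-of-Schwartz hypothesis supplying the $T\to0$ decay of the free evolution) is the standard one and matches how the authors themselves describe \cite{BraH} (a ``perturbation method'', an ``adaptation of Kato's $L^p$-theory''). Your exponent bookkeeping for the diagonal estimates is correct: $p>3$ governs the singularity at $s=t$ in $B$ and $C$, $\frac1p+\frac1q>\frac23$ governs the singularity at $s=0$ in $C$, and $q<3$ governs $L$.

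Two places in your sketch, however, conceal genuine work. (a) The buoyancy term $L$ is \emph{linear}, and its norm from $Y_{q,T}$ to $Z_{p,T}$ is a $T$-independent constant that does not become small as $T\to0$; hence the fixed-point map is not a small perturbation of the identity, and the ``standard contraction lemma'' for quadratic equations does not apply as you state it. The fix is easy but must be said: the coupling $\mathcal{L}(u,\theta)=(L\theta,0)$ is nilpotent, $\mathcal{L}^2=0$, so $I-\mathcal{L}$ is invertible with norm at most $1+\|L\|$ and one contracts the equivalent equation $(u,\theta)=(I-\mathcal{L})^{-1}\bigl((a,b)+\text{bilinear terms}\bigr)$; alternatively one works with a weighted product norm $\|u\|+K\|\theta\|$ for $K$ large. (b) The $Y_{\infty,T}$ estimate for $C$ does \emph{not} reduce to a single convergent Beta integral. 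Writing the integrand as $(t-s)^{-\frac12-\frac32\sigma}\,s^{-2+\frac32\sigma}$ with $\sigma=\frac1{p_1}+\frac1{q_1}$ ranging over the admissible H\"older pairings, convergence at $s=t$ forces $\sigma<\frac13$ while convergence at $s=0$ forces $\sigma>\frac23$, so no single pairing works; one must split $\int_0^t=\int_0^{t/2}+\int_{t/2}^t$ and use the pairing with $\frac1p+\frac1q>\frac23$ on the first half and the $L^\infty$-based pairing on the second. (A related, milder issue occurs for $B$ into $Z_{\infty,T}$ when $3<p\le6$, where the mixed $L^\infty\times L^p$ pairing is needed.) Both devices are standard, but as written your claim that each operator is controlled by one scaling-exact Beta integral would fail for the $L^\infty$-weighted components, which are precisely the norms the theorem asserts tend to zero.
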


Let us observe that the perturbation method used in~\cite{BraH} to establish Theorem~\ref{th:brahe} provides the 
well-posedness only in the space where the solution is constructed.

\begin{proof}[Proof of Theorem~\ref{th:existence}]
Under the assumptions of the first item of Theorem~\ref{th:existence}, we have
$u_0\in L^3(\R^3)^3$ and  $\theta_0\in B^{-1}_{\frac32,2}(\R^3)$,
which is continuously embedded in $B^{-3(1-1/q)}_{q,\infty}$, for all $q>3/2$.
Moreover, the Schwartz class is dense both in $L^3$ and 
in Besov spaces with finite third index.
Therefore we may apply Theorem~\ref{th:brahe}. Choosing, for example,
$p=6$ and $q=2$ we obtain the existence, for some $T>0$, of a solution 
$(u,\theta)\in Z_{6,T}\times Y_{2,T}$, such that
$u\in \C([0,T],L^3(\R^3)^3)$ and 
\begin{equation}
\label{eq:vani}
\|u\|_{Z_{6,T}}+\|\theta\|_{Y_{2,T}}\xrightarrow[T\to0]{}0.
\end{equation}
By the Boussinesq equation~\eqref{def:sol}, we have $\theta=b+C(u,\theta)$.
Moreover, by the heat kernel characterisation of Besov spaces,
we deduce from the condition $\theta_0\in B^{-1}_{\frac32,2}(\R^3)$,
that $b=e^{t\Delta}\theta_0\in L^2(0,T;L^\frac32(\R^3)^3)$.
Now, reducing if necessary the length of time interval where the solution is considered,  we can assume that $T$ is such that
$\kappa\|u\|_{Z_{6,T}}<1$. Hence, by Proposition~\ref{prop:lo}, we see that
the linear operator
$C(\cdot,u)\colon L^2(0,T;L^\frac32(\R^3)^3)\to L^2(0,T;L^\frac32(\R^3)^3)$
is bounded with norm less than~$1$.
Therefore, the operator $T:=I-C(\cdot,u)$ is invertible in such space.
But $T(\theta)=b$, hence $\theta=T^{-1}(b)\in  L^2(0,T;L^\frac32(\R^3)^3)$.

Let us prove the second assertion of Theorem~\ref{th:existence}. 
If $\theta_0$ belongs to the homogeneous Besov space $\dot B^{-1}_{3/2,2}(\R^3)$,
then $b\in L^2(0,\infty;L^\frac32(\R^3))$. Moreover, the norm
of~$b$ is controlled by  $\|\theta_0\|_{\dot B^{-1}_{3/2,2}(\R^3)}$ (and conversely).
If $\|u_0\|_{L^3}+\|\theta_0\|_{\dot B^{-1}_{3/2,2}(\R^3)}$ is smaller than
a suitable absolute constant 
(or, more in general, if $\|u_0\|_{\dot B^{-(1-3/p)}_{p,\infty}(\R^3)}
+\|\theta_0\|_{\dot B^{-3(1-1/q)}_{q,\infty}(\R^3)}$ is smaller than a constant
depending only on $p$ and $q$, where $p$ and $q$ are
as in Theorem~\ref{th:brahe}),
then the estimates in~\cite{BraH} provide the global existence of the solution,
with $u\in \C_b(0,\infty;L^3(\R^3))$. Moreover,
$\|u\|_{Z_{6,\infty}}$ is controlled by the size of the initial data $(u_0,\theta_0)$
in $L^3(\R^3)^3\times \dot B^{-1}_{3/2,2}(\R^3)$.
Therefore, $\kappa\|u\|_{Z_{6,\infty}}$ can be assumed to be smaller than~$1$.
Then the above argument applies with $T=+\infty$. This completely establishes Theorem~\ref{th:existence}.
\end{proof}


\end{document}